\newtheorem{theorem}{Theorem}[section]
\newtheorem{lemma}[theorem]{Lemma}
\newtheorem{proposition}[theorem]{Proposition}
\newtheorem{corollary}[theorem]{Corollary}
\newtheorem{problem}[theorem]{Problem}
\theoremstyle{definition}
\newtheorem{definition}[theorem]{Definition}
\newtheorem{remark}{Remark}[section]
\numberwithin{equation}{section}
\begin{document}

\noindent {\footnotesize\tiny}\\[1.00in]

\title[]{Monotone and Pseudo-Monotone Equilibrium Problems in Hadamard Spaces}
\maketitle
\begin{center}
{\sf Hadi Khat\texttt{}ibzadeh\footnote{E-mail: $^{1}$hkhatibzadeh@znu.ac.ir, $^{2}$mohebbi@znu.ac.ir.} and Vahid Mohebbi$^2$}\\
{\footnotesize{\it $^{1,2}$ Department of Mathematics, University
of Zanjan, P. O. Box 45195-313, Zanjan, Iran.}}
\end{center}
\begin{abstract}
As a continuation of previous work of the first author with S. Ranjbar \cite{kr2} on a special form of variational inequalities in Hadamard spaces, in this paper we study equilibrium problems in Hadamard spaces, which extend variational inequalities and many other problems in nonlinear analysis. In this paper, first we study the existence of solutions of
equilibrium problems associated with pseudo-monotone bifunctions
with suitable conditions on the bifunctions in Hadamard spaces. Then to approximate of an equilibrium point, we
consider the proximal point algorithm for pseudo-monotone
bifunctions. We prove existence of the sequence generated by
the algorithm in several cases in Hadamard spaces. Next, we introduce the resolvent of a bifunction in Hadamard spaces. We prove convergence of the resolvent to an equilibrium point. We also prove
$\bigtriangleup$-convergence of the sequence generated by the proximal point algorithm to an equilibrium
point of the pseudo-monotone bifunction and also the strong
convergence with additional assumptions on the bifunction.
Finally, we study a regularization of Halpern type and prove the
strong convergence of the generated sequence to an equilibrium point
without any additional assumption on the pseudo-monotone bifunction. Some examples in fixed point theory and convex minimization are also presented.\\
{\bf Keywords:} Hadamard space, equilibrium problem, Halpern regularization, proximal point algorithm, pseudo-monotone bifunction, strong convergence, $\bigtriangleup$-convergence.
\end{abstract}

\section{\bf Introduction}
\noindent

Let $(X,d)$ be a metric space. A geodesic from $x$ to $y$ is a map
$\gamma$ from the closed interval $[0,d(x,y)] \subset \mathbb{R}$
to $X$ such that $\gamma(0) = x,\ \gamma(d(x,y)) = y$ and
$d(\gamma(t), \gamma(t')) = |t - t'| $ for all $t, t'\in [0,d(x,y)]$.
The image of $\gamma$ is called a geodesic (or metric) segment
joining from $x$ to $y$. When it is unique, this geodesic segment
is denoted by $[x, y]$. The space $(X, d)$ is said to be a
geodesic space if every two points of X are joined by a geodesic,
and X is said to be uniquely geodesic if there is exactly one
geodesic joining $x$ and $y$ for each $x, y\in X$.
A subset $Y$ of $X$ is said to be convex, if for any two points $x,y \in Y$, the geodesic
 joining $x$ and $y$ is contained in $Y$, that is, if $\gamma :[0,d(x,y)]\longrightarrow X$ is a
  geodesic such that $x=\gamma (0)$ and $y=\gamma (d(x,y))$, then $\gamma(t) \in Y, \  \  \forall t \in [0,d(x,y)]$.\\
Let $X$ be a uniquely geodesic space and $Y\subset X$. The convex hull of
$Y$ (denoted by ${\rm conv}(Y)$) is the intersection of all convex subsets of $X$ that contain $Y$. We recall the following lemma from \cite{pap}, which will be used in the next section.

\begin{lemma} \label{conv lemma}
	Let $X$ be unique geodesic metric space and let $A$ be a subset of $X$. We set $C_0(A)=A$ and for every integer $n\geq0$, we let $C_{n+1}(A)$ be the union of all the geodesic segments in $X$ that join pairs of points in $C_n(A)$. Then, the geodesic convex hull ${\rm conv}(A)$ of $A$ is given by
$${\rm conv}(A)=\bigcup_{n\geq0}C_n(A).$$
\end{lemma}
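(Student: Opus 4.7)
The plan is to prove the claimed equality by establishing the two inclusions separately, exploiting the minimality of $\mathrm{conv}(A)$ on one side and the induction on the level $n$ on the other.

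For the inclusion $\bigcup_{n\ge 0}C_n(A)\subseteq\mathrm{conv}(A)$, I would argue by induction on $n$ that $C_n(A)\subseteq\mathrm{conv}(A)$. The base case $n=0$ is immediate since $C_0(A)=A\subseteq\mathrm{conv}(A)$ by definition. For the inductive step, assuming $C_n(A)\subseteq\mathrm{conv}(A)$, any geodesic segment joining two points of $C_n(A)$ also joins two points of $\mathrm{conv}(A)$, and by convexity of $\mathrm{conv}(A)$ this segment lies entirely inside $\mathrm{conv}(A)$; hence $C_{n+1}(A)\subseteq\mathrm{conv}(A)$. Taking the union over $n$ yields the inclusion.

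For the reverse inclusion $\mathrm{conv}(A)\subseteq\bigcup_{n\ge 0}C_n(A)$, the strategy is to verify that the set $U:=\bigcup_{n\ge 0}C_n(A)$ is itself convex and contains $A$; then by the very definition of $\mathrm{conv}(A)$ as the intersection of all convex sets containing $A$, one gets $\mathrm{conv}(A)\subseteq U$. That $A\subseteq U$ is clear since $A=C_0(A)$. A small preliminary observation I would make is that the sequence $(C_n(A))_{n\ge 0}$ is nondecreasing: for any $x\in C_n(A)$, the constant geodesic from $x$ to $x$ shows $x\in C_{n+1}(A)$. To verify convexity of $U$, take $x,y\in U$; then $x\in C_m(A)$ and $y\in C_k(A)$ for some $m,k$, and by the monotonicity just noted, both belong to $C_{\max(m,k)}(A)$. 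The (unique) geodesic segment joining them therefore lies in $C_{\max(m,k)+1}(A)\subseteq U$, proving convexity.

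The argument is essentially bookkeeping, so I do not anticipate a serious obstacle; the only subtle point is the monotonicity $C_n(A)\subseteq C_{n+1}(A)$, which must be established before one can reduce two points lying in different $C_n$'s to a common level. Uniqueness of geodesics plays no essential role beyond the fact that ``the geodesic segment'' is well-defined; the same proof would go through with a suitable reformulation in the non-uniquely-geodesic case by taking unions over all segments.
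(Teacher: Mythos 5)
Your proof is correct. Note that the paper does not prove this lemma at all --- it simply recalls it from Papadopoulos's book \cite{pap} --- so there is no in-paper argument to compare against; your two-inclusion argument (induction to show each $C_n(A)\subseteq\mathrm{conv}(A)$, and convexity of the increasing union for the reverse inclusion, with the monotonicity $C_n(A)\subseteq C_{n+1}(A)$ correctly singled out as the one step needing care) is the standard and complete proof of this fact.
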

A geodesic triangle $\Delta (x_1, x_2, x_3)$ in a
geodesic metric space $(X, d)$ consists of three points $x_1,
x_2$ and $x_3$ in $X$ (the vertices of $\Delta$) and a geodesic
segment between each pair of vertices (the edges of $\Delta$). A
comparison triangle for the geodesic triangle $\Delta(x_1, x_2,
x_3)$ in $(X, d)$ is a triangle $\bar{\Delta}(x_1, x_2, x_3) $:=
$\Delta(\bar{x_1}, \bar{x_2}, \bar{x_3})$ in the Euclidean plane
$\mathbb{E}^2$ such that $d_{\mathbb{E}^2}(\bar{x_i}, \bar{x_j})
= d(x_i, x_j )$ for $i, j\in \{1, 2, 3\}$, where
$d_{\mathbb{E}^2}$ is the usual metric in $\mathbb{R}^2$. A geodesic space is said to be a CAT(0) space
if all geodesic triangles satisfy the following comparison axiom.
Let $\Delta$ be a geodesic triangle in $X$, and let
$\bar{\Delta}$ be a comparison triangle for $\Delta$. Then,
$\Delta$ is said to satisfy the CAT(0) inequality if for all $x,
y\in \Delta$ and all comparison points $\bar{x}, \bar{y}\in
\bar{\Delta}$,
$$d(x, y) \leq d_{\mathbb{E}^2}(\bar{x},\bar{y}).$$

A complete CAT(0) space is called a Hadamard space. We
now collect some elementary facts about CAT(0) spaces, which will be used in the proofs of
our main results.
\begin{lemma}\label{mid}
Let X be a CAT(0) space and $x, y\in X$ then, for each $t\in [0, 1]$, there exists a unique point $z\in [x, y]$ such that
$d(x, z) = td(x, y)$ and $d(y, z) = (1 - t)d(x, y)$.
\end{lemma}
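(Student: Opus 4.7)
The plan is to reduce the statement to two ingredients: (i) a CAT(0) space is uniquely geodesic, so that the segment $[x,y]$ is well-defined, and (ii) once we have a fixed geodesic parametrization, the desired $z$ is immediate by picking the correct parameter value. Since $X$ is by hypothesis a geodesic space, at least one geodesic $\gamma:[0,d(x,y)]\to X$ from $x$ to $y$ exists; what needs work is the "unique" part of "unique point $z\in[x,y]$."

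\textbf{Step 1 (Existence).} I would fix a geodesic $\gamma:[0,d(x,y)]\to X$ with $\gamma(0)=x$, $\gamma(d(x,y))=y$, and set $z:=\gamma(t\,d(x,y))$. Because $\gamma$ is an isometry onto its image, $d(x,z)=|t\,d(x,y)-0|=t\,d(x,y)$ and $d(y,z)=|d(x,y)-t\,d(x,y)|=(1-t)d(x,y)$. This gives existence with essentially no computation.

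\textbf{Step 2 (Uniqueness of the geodesic segment $[x,y]$).} This is the main obstacle. I would apply the CAT(0) inequality to a degenerate comparison triangle to show that any two geodesics from $x$ to $y$ must coincide. Concretely, suppose $\gamma_1,\gamma_2$ are two such geodesics, pick the midpoints $m_i:=\gamma_i(d(x,y)/2)$, and consider the geodesic triangle $\Delta(x,y,y)$ realized by $\gamma_1$ and $\gamma_2$ (together with the trivial edge at $y$). Its comparison triangle $\bar\Delta$ in $\mathbb{E}^2$ collapses to a segment of length $d(x,y)$, so the comparison points $\bar m_1,\bar m_2$ coincide. The CAT(0) inequality then yields $d(m_1,m_2)\le d_{\mathbb{E}^2}(\bar m_1,\bar m_2)=0$, i.e.\ $m_1=m_2$. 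Iterating the midpoint argument on each half of $[x,y]$, one sees that $\gamma_1$ and $\gamma_2$ agree on a dense subset of $[0,d(x,y)]$, and continuity finishes the job. Hence $[x,y]$ is unambiguous.

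\textbf{Step 3 (Uniqueness of $z$ inside $[x,y]$).} Given uniqueness of $\gamma$, any candidate $z'\in[x,y]$ can be written as $\gamma(s)$ for a unique $s\in[0,d(x,y)]$. The equations $d(x,z')=s$ and $d(y,z')=d(x,y)-s$ together with $d(x,z')=t\,d(x,y)$ force $s=t\,d(x,y)$, so $z'=z$.

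The only nontrivial ingredient is Step 2; Steps 1 and 3 are mechanical consequences of the definition of a geodesic parametrization. If the uniqueness-of-geodesics fact is considered standard background for CAT(0) spaces, one could compress the proof to a few lines by simply invoking it and executing Steps 1 and 3.
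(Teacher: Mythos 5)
Your argument is correct, but it is worth noting that the paper does not actually prove this lemma: it simply cites Lemma 2.1(iv) of Dhompongsa--Panyanak \cite{sdh}, so your self-contained proof is doing genuinely more work than the source. Your decomposition is the standard one (existence from the geodesic hypothesis, uniqueness of the segment from the CAT(0) comparison axiom, then uniqueness of the point on the segment), and each step is sound: in Step 1 the arc-length parametrization immediately gives the two distance identities, and in Step 3 the fact that $\gamma$ is an isometric embedding pins down $s=t\,d(x,y)$. The only place where you take a slightly roundabout path is Step 2: once you form the degenerate triangle $\Delta(x,y,y)$ whose comparison triangle collapses to a segment, the comparison points $\overline{\gamma_1(s)}$ and $\overline{\gamma_2(s)}$ coincide for \emph{every} $s\in[0,d(x,y)]$, not just at the midpoint, so the CAT(0) inequality yields $d(\gamma_1(s),\gamma_2(s))=0$ for all $s$ in one stroke; the midpoint-plus-dyadic-density iteration is valid but unnecessary. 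In short, your proof is a correct, more elementary and self-contained route to a fact the paper treats as imported background.
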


\begin{proof}
See Lemma 2.1 (iv) of \cite{sdh}.
\end{proof}

We will use the notation $(1 - t)x \oplus ty$ for the unique point $z$ satisfying in the above statement.

\begin{lemma}\label{metric} Let X be a CAT(0) space. Then for all $x, y, z\in X$ and $t, s\in [0, 1]$, we have \\
(i) $d((1 - t)x \oplus ty, z) \leq (1 - t)d(x, z) + td(y, z)$,\\
(ii) $d((1 - t)x \oplus ty, (1 - s)x \oplus sy) = |t - s|d(x, y)$,\\
(iii) $d((1 - t)z \oplus tx, (1 - t)z \oplus ty) \leq td(x, y)$,\\
(iv) $d^2((1 - t)x \oplus ty, z)\leq (1 - t)d^2(x, z) + td^2(y, z)- t(1 - t)d^2(x, y)$.
\end{lemma}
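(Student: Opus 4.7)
The plan is to derive each of the four claims by passing to a Euclidean comparison triangle and combining the CAT(0) axiom with a corresponding identity or inequality in $\mathbb{E}^2$. The common technical point to be used in parts (i), (iii), (iv) is the following: if $\Delta(x,y,z)$ is a geodesic triangle in $X$ with comparison triangle $\Delta(\bar x, \bar y, \bar z) \subset \mathbb{E}^2$, then for every $t \in [0,1]$ the comparison point of $p_t := (1-t)x \oplus ty$ on the Euclidean edge $[\bar x, \bar y]$ is the affine combination $\bar p_t := (1-t)\bar x + t\bar y$. Indeed, by Lemma \ref{mid}, $p_t$ is the unique point of $[x,y]$ at distance $t\,d(x,y)$ from $x$ and $(1-t)d(x,y)$ from $y$; the same two distance constraints pick out $\bar p_t$ uniquely on $[\bar x, \bar y]$, and this matching is exactly what the comparison triangle requires.

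Item (ii) is handled without any comparison construction. The geodesic $\gamma : [0, d(x,y)] \to X$ from $x$ to $y$ satisfies $d(\gamma(\tau), \gamma(\tau')) = |\tau - \tau'|$ by definition, and the identification $(1-t)x \oplus ty = \gamma(t\,d(x,y))$ (via Lemma \ref{mid}) immediately gives $d((1-t)x \oplus ty,\; (1-s)x \oplus sy) = |t - s|\, d(x,y)$ as an equality.

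For (i), apply the CAT(0) inequality to the pair $(p_t, z)$ in $\Delta(x,y,z)$ and combine with the Euclidean triangle inequality $\|\bar p_t - \bar z\| \leq (1-t)\|\bar x - \bar z\| + t\|\bar y - \bar z\|$; using $\|\bar x - \bar z\| = d(x,z)$ and $\|\bar y - \bar z\| = d(y,z)$ finishes the argument. For (iv), again apply CAT(0) to the same pair but square both sides, then use the elementary polarization identity
\[
\|(1-t)\bar x + t\bar y - \bar z\|^2 = (1-t)\|\bar x - \bar z\|^2 + t\|\bar y - \bar z\|^2 - t(1-t)\|\bar x - \bar y\|^2,
\]
which is verified by expanding the left-hand side in the inner product. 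For (iii), work inside $\Delta(z,x,y)$ and consider the two points $q_t := (1-t)z \oplus tx \in [z,x]$ and $r_t := (1-t)z \oplus ty \in [z,y]$; their comparison points are $\bar q_t = (1-t)\bar z + t\bar x$ and $\bar r_t = (1-t)\bar z + t\bar y$, so $\|\bar q_t - \bar r_t\| = t\,\|\bar x - \bar y\|$, and CAT(0) applied to the pair $(q_t, r_t)$ (which lies in the single triangle $\Delta(z,x,y)$) yields $d(q_t, r_t) \leq t\, d(x,y)$.

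I do not anticipate any serious obstacle: each step is essentially bookkeeping. The only point requiring care is (iii), where one must observe that $q_t$ and $r_t$ sit on two different edges of the \emph{same} geodesic triangle $\Delta(z,x,y)$, so that the CAT(0) axiom applies to the pair directly rather than through an auxiliary triangle.
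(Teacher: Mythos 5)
Your proposal is correct, and all four parts go through exactly as you describe: the identification of the comparison point of $(1-t)x\oplus ty$ with the affine combination $(1-t)\bar x+t\bar y$ is justified by the two distance constraints from Lemma \ref{mid}, the Euclidean polarization identity in (iv) checks out by direct expansion, and your observation in (iii) that $q_t$ and $r_t$ lie on two edges of the \emph{same} triangle $\Delta(z,x,y)$ --- so the comparison axiom applies to the pair directly --- is precisely the point that needs care. The paper itself does not prove this lemma; it merely cites Lemmas 2.4 and 2.5 of \cite{sdh}, \cite{pch}, and Lemma 3 of \cite{wak} for the four parts respectively. Your argument is a self-contained version of the standard comparison-triangle proofs underlying those references, so what you have written is strictly more informative than what appears in the paper, at the cost of a page of bookkeeping that the authors chose to outsource.
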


\begin{proof}
(i) see Lemma 2.4 of \cite{sdh},
(ii) see \cite{pch},
(iii) see Lemma 3 of \cite{wak},
(iv) see Lemma 2.5 of \cite{sdh}.
\end{proof}

Berg and Nikolaev in \cite{nik2, nik} introduced the concept of quasi-linearization
along these lines. Let us formally denote a pair $(a, b) \in X \times  X$ by $\overrightarrow{ab}$ and call it a vector. Then quasi-linearization is defined as a map $\langle \cdot, \cdot\rangle  : (X \times X)\times(X \times X)\rightarrow \mathbb{R}$ defined by\\
$$\langle \overrightarrow{ab},\overrightarrow{cd}\rangle
 =\frac{1}{2}\{d^2(a, d) + d^2(b, c) - d^2(a, c) - d^2(b, d)\}  \   \   \    \ (a, b, c, d \in X).$$
It is easily seen that
$\langle \overrightarrow{ab},\overrightarrow{cd}\rangle
 =
\langle \overrightarrow{cd},\overrightarrow{ab}\rangle $,
$\langle \overrightarrow{ab},\overrightarrow{cd}\rangle
 = -\langle \overrightarrow{ba},\overrightarrow{cd}\rangle $
 and $\langle \overrightarrow{ax},\overrightarrow{cd}\rangle
+
\langle \overrightarrow{xb},\overrightarrow{cd}\rangle
 =
\langle \overrightarrow{ab},\overrightarrow{cd}\rangle $
for all $a, b, c, d, x \in X$. We say that $X$ satisfies the Cauchy-Schwarz inequality if
$\langle \overrightarrow{ab},\overrightarrow{cd}\rangle  \leq d(a, b)d(c, d)$
for all $a, b, c, d \in X$. It is known (Corollary 3 of \cite{nik}) that a geodesically connected metric space
is a CAT(0) space if and only if it satisfies the Cauchy-Schwarz inequality.

A Hadamard space $X$ is called flat Hadamard space iff inequality in Part (iv) of Lemma \ref{metric} is equality. A well-known result asserts that a flat Hadamard space is isometric to a closed convex subset of a Hilbert space. It is easy to check that in a flat Hadamard space $X$, for each $x,y,z,u\in X$ and $0\leq\lambda\leq1$,
\begin{equation}\langle \overrightarrow{xy},\overrightarrow{x(\lambda z\oplus (1-\lambda)u)}\rangle =\lambda\langle \overrightarrow{xy},\overrightarrow{xz}\rangle +(1-\lambda)\langle \overrightarrow{xy},\overrightarrow{xu}\rangle .\label{affin inner product}\end{equation}

Let $(X,d)$ be a Hadamard space, $\{x_n\}$ be a bounded sequence
in $X$ and $x\in X$. Let $r(x,\{x_n\})=\limsup d(x,x_n)$. The
asymptotic radius of $\{x_n\}$ is given by $r(\{x_n\})= \inf
\{r(x,\{x_n\})| x\in X \}$ and the asymptotic center of
$\{x_n\}$ is the set $A(\{x_n\})=\{x \in X|
r(x,\{x_n\})=r(\{x_n\})\}$. It is known that in a Hadamard space,
$A(\{x_n\})$ is singleton.

\begin{definition}
A sequence $\{x_n\}$ in a Hadamard space $(X,d)$
$\triangle$-converges to $x\in X$
if $A(\{x_{n_k}\})=\{x\}$, for each subsequence $\{x_{n_k}\}$ of $\{x_n\}$.\\
We denote $\triangle$-convergence in $X$ by
$\overset{\triangle}{\longrightarrow}$
 and the (strong) metric convergence by $\rightarrow$.
\end{definition}

\begin{lemma}\label{compact}
Let $X$ be a complete CAT(0) space. Then, every bounded closed convex subset of $X$ is $\triangle$-compact; i.e. every bounded sequence in it, has a $\triangle$-convergent subsequence.
\end{lemma}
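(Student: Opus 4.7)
The plan is to pass to a \emph{regular} subsequence of $\{x_n\}$---one whose asymptotic radius is preserved under taking further subsequences---and then to exploit the uniqueness of the asymptotic center (already asserted for Hadamard spaces) to conclude $\triangle$-convergence to a point of $C$.

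Let $C\subseteq X$ be bounded, closed, and convex, and let $\{x_n\}\subseteq C$. To produce a regular subsequence, I would use a diagonal construction. Set $r_1=\inf\{r(\{y_n\}):\{y_n\}\text{ is a subsequence of }\{x_n\}\}$ and choose a subsequence $\{x_n^{(1)}\}$ of $\{x_n\}$ with $r(\{x_n^{(1)}\})<r_1+1$. Inductively, given $\{x_n^{(m-1)}\}$, put $r_m=\inf\{r(\{y_n\}):\{y_n\}\text{ subsequence of }\{x_n^{(m-1)}\}\}$ and pick a subsequence $\{x_n^{(m)}\}$ of $\{x_n^{(m-1)}\}$ with $r(\{x_n^{(m)}\})<r_m+1/m$. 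Since $C$ is bounded, the numbers $r_m$ are nondecreasing and bounded above, so they converge to some $R$. Form the diagonal $u_k=x_k^{(k)}$. Because $\{u_k\}_{k\ge m}$ is a subsequence of $\{x_n^{(m)}\}$, one gets $r_m\le r(\{u_k\})\le r_m+1/m$ for every $m$, hence $r(\{u_k\})=R$; the same estimate applied to any further subsequence $\{u_{k_j}\}$ yields $r(\{u_{k_j}\})=R$, so $\{u_k\}$ is regular.

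Let $u$ denote the unique asymptotic center of $\{u_k\}$ in $X$. For an arbitrary subsequence $\{v_n\}$ of $\{u_k\}$, passing to a subsequence can only decrease $\limsup d(u,\cdot)$, so $r(u,\{v_n\})\le r(u,\{u_k\})=r(\{u_k\})=r(\{v_n\})$ by regularity. Since $r(u,\{v_n\})\ge r(\{v_n\})$ by definition, equality holds, so $u\in A(\{v_n\})$, and the uniqueness of asymptotic centers forces $A(\{v_n\})=\{u\}$. This is exactly $u_k\overset{\triangle}{\longrightarrow}u$. To verify $u\in C$, if $u\notin C$ I would invoke the nearest-point projection $w=P_C(u)$, which exists and is unique in a Hadamard space, together with the projection inequality $d^2(u,y)\ge d^2(u,w)+d^2(w,y)$ for every $y\in C$. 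Since $d(w,u_k)$ is uniformly bounded, a short algebraic estimate yields $d(u,u_k)\ge d(w,u_k)+c$ for a positive constant $c$, so $r(w,\{u_k\})<r(u,\{u_k\})$, contradicting the minimality of $u$.

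The hard part is executing the diagonal construction cleanly: one must justify that the diagonal $\{u_k\}$ is \emph{eventually} a subsequence of every $\{x_n^{(m)}\}$, so that the bounds $r_m\le r(\{u_k\})\le r_m+1/m$ really hold simultaneously for every $m$. Once that careful bookkeeping is done, regularity drops out, and the remaining steps---identifying the asymptotic center of every subsequence with $u$, and checking that $u\in C$---are routine consequences of uniqueness of the asymptotic center and of the existence of the metric projection onto a closed convex subset of a Hadamard space.
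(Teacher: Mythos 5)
The paper proves this lemma only by citation to Proposition 3.6 of \cite{w-b}, and your argument is a correct, self-contained rendering of exactly that standard proof: extract a regular subsequence by diagonalization, identify its unique asymptotic center as the $\triangle$-limit of every further subsequence, and use the metric projection inequality to place the limit in $C$. The diagonal bookkeeping you flag is routine (the $k$-th diagonal term occupies position at least $k$ in each earlier row, so the tail $\{u_k\}_{k\ge m}$ is genuinely a subsequence of $\{x_n^{(m)}\}$, and the asymptotic radius depends only on tails), so there is no gap.
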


\begin{proof}
Proposition 3.6 of \cite{w-b}.
\end{proof}

Let $X$ be a complete CAT(0) space. Then, every closed convex subset $K$ of $X$ is $\triangle$-closed in the sense that
it contains all $\triangle$-limit points of every $\triangle$-convergent sequence.

A function $f:X\rightarrow]-\infty,+\infty]$ is called:\\ (1) convex
iff
$$f(\lambda x\oplus(1-\lambda)y)\leq\lambda f(x)+(1-\lambda)f(y),\ \ \forall x,y\in X \ \text{and}\  \lambda\in(0,1)$$
(2) quasiconvex iff
$$f(\lambda x\oplus(1-\lambda)y)\leq\max\{f(x),f(y)\},\ \ \forall x,y\in X\ \text{and}\  \lambda\in(0,1)$$
(3) quasiconcave iff $-f$ is quasiconvex.

A function $f:X\rightarrow]-\infty,+\infty]$ is called proper iff $D(f):=\{x\in X:\ f(x)<+\infty\}\neq\varnothing$. $f$ is called lower semicontinuous (shortly, lsc) at $x\in D(f)$ iff $$\liminf_{y\rightarrow x}f(y)\geq f(x)$$ and it is called $\triangle$-lower semicontinuous (shortly $\triangle$-lsc) at $x\in D(f)$ iff $$\liminf_{y\overset{\triangle}{\rightarrow} x}f(y)\geq f(x).$$
It is well known result that each convex and lsc function is $\triangle$-lsc.

Let $K\subset X$ be nonempty. Consider $f:K\times K\rightarrow\mathbb{R}$, $f$ is called a bifunction. An equilibrium problem for $f$ and $K$ as
briefly $EP(f;K)$ consists of finding $x^*\in K$ such that
$$f(x^*,y)\geq0,\ \ \ \forall y\in K.\ \ \ \  (EP)$$
$x^*$ is called an equilibrium point. We
denote the set of all equilibrium points for $(EP)$ by $S(f;K)$.
Each equilibrium problem has a dual, which is named "convex
feasibility problem" (for short, $CFP$). It consists of finding
$x^*\in K$ such that $f(x,x^*)\leq0$, for all $x \in K$. A convex
feasibility problem for $f$ and $K$ is denoted by $CFP(f,K)$.
Equilibrium problems extend and unify several problems in
optimization, variational inequalities, fixed point theory and
many other problems in nonlinear analysis. Here, $K\subset
X$ denotes a nonempty, closed and convex set unless explicitly
states otherwise. Take $o \in X $, where $o$ is an arbitrary but
fixed point ($o$ is called base-point). The following conditions may be used throughout the paper, therefore we denominate them as:\\
$P1$: $f(x,x)=0$ for all $x\in K$.\\
$P2$: $f(\cdot,y):K\rightarrow\mathbb{R}$ is upper semi-continuous  for all $y\in K$.\\
$P3$: $f(x,\cdot):K\rightarrow\mathbb{R}$ is convex and lower semi-continuous  for all $x\in K$.\\
$f$ is called monotone, iff\\
$P4$: $f(x,y)+f(y,x)\leq0$, for all $x,y\in K$.\\
$f$ is called pseudo-monotone, iff\\
$P4^*$: Whenever $f(x,y)\geq0$ with $x,y\in K$ it holds that $f(y,x)\leq0$.\\
$f$ is called $\theta$-undermonotone, iff\\
$P4^{\bullet}$: There exists $\theta\geq0$ such that
$f(x,y)+f(y,x)\leq \theta d^2(x,y)$, for all $x,y\in K$.\\
$f$ is called coercive, iff\\
$P5$: Let $o \in X $ be the base-point. Then  for any sequence
$\{x_k\}\subset K$ satisfying $\lim d(x_k,o)=+\infty$,
 there exists $u \in K$ and $n_0\in \mathbb{N}$ such that $f(x_n,u)\leq 0$, for all $n \geq n_0$.\\

Equilibrium problems for monotone and pseudo-monotone bifunctions have been extensively studied  in Hilbert, Banach as well as in topological vector spaces by many authors (see \cite{bia-sch, cha-chb-ria, com-hir, iu-k-s, al-wi} and many other references). Recently some authors have studied on equilibrium problems in Hadamard manifolds (see \cite{vitto, nn}). In order to extend and unify of the related results from Hilbert spaces and Hadamard manifolds as well as extension of some recent works on variational inequalities and minimization problems in Hadamard spaces (see \cite{kr2, qro, tzh} ), we study monotone and pseudomonotone equilibrium problems in Hadamard space setting.

The paper has been organized as follows. In the sequel of introduction, we present some well-known lemmas in the Hadamard space framework. In Section 2, we study the existence of solutions of equilibrium problems. In Section 3, in order to approximate an equilibrium point, we use an auxiliary problem. Existence of solutions of the auxiliary problem is not guaranteed for bifunctions with usual assumptions $P1,P2,P3,P4,P4^{*},P4^{\bullet}$ in general Hadamard spaces. In this section we study the existence of solutions of the auxiliary problem in several special cases. Section 4 is devoted to introduce the resolvent operator for pseudo-monotone bifunction and its strong convergence to an equilibrium point. In Section 5, we prove $\Delta$-convergence of the proximal point algorithm for pseudo-monotone bifunctions in Hadamard spaces. Since the strong convergence  (convergence in metric) does not occur even in Hilbert space, in Section 6, we prove strong convergence a regularized version of the sequence of Halpern type in Hadamard spaces. Finally in Section 7, some examples and applications will be presented. Now, we present some lemmas that we need them in the next section.

\begin{lemma}\label{lem01}
With conditions $P1$, $P2$ and $P3$, every solution of  $CFP(f,K)$
solves $EP(f,K)$.
\end{lemma}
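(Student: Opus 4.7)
The plan is to argue via a Minty–Fan type ``linearization'' along a geodesic, then pass to the limit. Fix $x^{*}\in K$ solving $CFP(f,K)$, i.e.\ $f(x,x^{*})\le 0$ for every $x\in K$. Fix an arbitrary $y\in K$ and, for $t\in(0,1)$, set $x_{t}:=(1-t)x^{*}\oplus ty$; by convexity of $K$, $x_{t}\in K$. The only trick is to find an inequality in which $f(x_{t},y)$ appears by itself.

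That inequality comes from $P3$ applied to the function $f(x_{t},\cdot)$, evaluated along the geodesic segment $[x^{*},y]$ at the parameter $t$: since $f(x_{t},\cdot)$ is convex,
\[
f(x_{t},x_{t})\;=\;f\bigl(x_{t},\,(1-t)x^{*}\oplus ty\bigr)\;\le\;(1-t)\,f(x_{t},x^{*})+t\,f(x_{t},y).
\]
Now $P1$ gives $f(x_{t},x_{t})=0$, and the $CFP$ hypothesis applied at $x_{t}\in K$ gives $f(x_{t},x^{*})\le 0$. Hence
\[
0\;\le\;(1-t)\,f(x_{t},x^{*})+t\,f(x_{t},y)\;\le\;t\,f(x_{t},y),
\]
so that $f(x_{t},y)\ge 0$ for every $t\in(0,1)$.

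It remains to let $t\downarrow 0$. Part~(i) of Lemma~\ref{metric} shows $d(x_{t},x^{*})\le t\,d(y,x^{*})\to 0$, so $x_{t}\to x^{*}$ in the metric topology. By $P2$ the map $x\mapsto f(x,y)$ is upper semi-continuous, so
\[
f(x^{*},y)\;\ge\;\limsup_{t\downarrow 0}f(x_{t},y)\;\ge\;0,
\]
where the last inequality holds simply because each term in the limsup is nonnegative. Since $y\in K$ was arbitrary, $x^{*}\in S(f;K)$.

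The only delicate point is the direction of the semicontinuity: we obtain a lower bound $f(x_{t},y)\ge 0$ but $P2$ only gives $\limsup f(x_{t},y)\le f(x^{*},y)$. This still works because a nonnegative sequence has nonnegative $\limsup$, so chaining the two inequalities yields $f(x^{*},y)\ge 0$; there is no issue and no auxiliary hypothesis beyond $P1$–$P3$ is needed.
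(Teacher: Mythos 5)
Your proof is correct and is exactly the standard Minty-type argument that the paper delegates to Lemma 2.4 of Iusem--Sosa \cite{is}: convexity of $f(x_t,\cdot)$ along the geodesic $[x^*,y]$ together with $P1$ and the $CFP$ inequality gives $f(x_t,y)\ge 0$, and upper semicontinuity in the first argument ($P2$) passes this to $f(x^*,y)\ge 0$ as $t\downarrow 0$. The point you flag about the direction of the semicontinuity is handled correctly, so no gap remains.
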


\begin{proof}
See Lemma 2.4 of \cite{is}
\end{proof}

\begin{corollary}\label{lem12}
If $f$ satisfies $P1$, $P2$, $P3$ and $P4^*$, then $EP(f,K)$ and $CFP(f,K)$ have the same solution set.
\end{corollary}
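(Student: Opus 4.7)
The plan is to prove the two inclusions $S(f;K) \subset \{x^* \in K : f(x,x^*) \leq 0 \text{ for all } x \in K\}$ and its reverse separately, and to observe that the latter is essentially free thanks to Lemma \ref{lem01}.

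For the forward direction, I would take $x^* \in S(f;K)$, so $f(x^*, y) \geq 0$ for every $y \in K$. Fixing an arbitrary $x \in K$ and applying this with $y = x$, we obtain $f(x^*, x) \geq 0$. Then property $P4^*$ (pseudo-monotonicity) immediately yields $f(x, x^*) \leq 0$. Since $x \in K$ was arbitrary, $x^*$ solves $CFP(f,K)$. Notice that this half does not use $P1$, $P2$, or $P3$ at all; pseudo-monotonicity alone suffices.

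For the reverse direction, suppose $x^*$ solves $CFP(f,K)$, i.e.\ $f(x, x^*) \leq 0$ for all $x \in K$. Since $f$ satisfies $P1$, $P2$, and $P3$, Lemma \ref{lem01} applies directly and gives $x^* \in S(f;K)$. Combining the two inclusions finishes the proof.

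There is essentially no obstacle here: the forward direction is a one-line application of the definition of $P4^*$, and the reverse direction is exactly the statement of Lemma \ref{lem01}. The only thing worth flagging is that the hypotheses split asymmetrically between the two inclusions, so one should be careful in the write-up to note that pseudo-monotonicity is needed only for $S(f;K) \subset S_{CFP}(f;K)$, while the continuity/convexity assumptions $P1$--$P3$ are needed only for the opposite inclusion.
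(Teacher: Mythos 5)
Your proposal is correct and follows exactly the reasoning the paper intends: the inclusion $CFP \Rightarrow EP$ is precisely Lemma \ref{lem01}, and the reverse inclusion is the immediate one-line application of $P4^*$, which is why the paper states this as a corollary without a written proof. Your remark about the asymmetric use of the hypotheses is accurate and a reasonable thing to note.
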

The following lemma, which is KKM lemma in complete CAT(0) spaces, has been proved on finite dimensional Hadamard manifolds in \cite{vitto}. The proof is similar for complete CAT(0) spaces, but for completeness of the paper, we rewrite the proof in complete CAT(0) spaces.

\begin{lemma}\label{lem kkm}
 Suppose $X$ is a complete CAT(0) space and $K\subset X$. Let $G:K\rightarrow 2^K$ be a mapping such that for each $x\in K$, $G(x)$ is $\triangle$-closed. Suppose that\\
i) $\forall x_1,\cdots,x_m\in K$, ${\rm conv}(\{x_1,\cdots,x_m\})\subset\bigcup_{i=1}^m G(x_i),$\\
ii) there exists $x_0\in K$ such that $G(x_0)$ is $\triangle$-compact,\\
then $\bigcap_{x\in K} G(x)\neq\varnothing$.
\end{lemma}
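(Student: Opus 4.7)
The plan is to reduce the statement to the classical Knaster--Kuratowski--Mazurkiewicz theorem on a Euclidean simplex via a continuous barycentric-type map into the iterated geodesic convex hull provided by Lemma \ref{conv lemma}. First I would exploit the $\triangle$-compactness assumption: since $G(x_0)$ is $\triangle$-compact and every $G(x)\cap G(x_0)$ is $\triangle$-closed in $G(x_0)$, the desired $\bigcap_{x\in K}G(x)\neq\varnothing$ follows once the finite intersection property is established, namely $\bigcap_{i=0}^{m}G(x_i)\neq\varnothing$ for every finite family $x_1,\ldots,x_m\in K$ (always adjoining $x_0$).

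For the finite version, fix $x_0,x_1,\ldots,x_m$ and let $\Delta^m\subset\mathbb{R}^{m+1}$ denote the standard $m$-simplex with vertices $e_0,\ldots,e_m$. I would construct a continuous map $\sigma:\Delta^m\to X$ inductively: set $\sigma_0(1)=x_0$, and, assuming $\sigma_{m-1}:\Delta^{m-1}\to X$ has been defined, put $\sigma_m(e_m)=x_m$ and
\[
\sigma_m(t_0,\ldots,t_m)=(1-t_m)\,\sigma_{m-1}\!\Bigl(\tfrac{t_0}{1-t_m},\ldots,\tfrac{t_{m-1}}{1-t_m}\Bigr)\oplus t_m\,x_m
\]
whenever $t_m<1$. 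Continuity at $t_m=1$ follows from Lemma \ref{metric}(i), which bounds $d(\sigma_m(t),x_m)$ by $(1-t_m)$ times the diameter of $\mathrm{conv}(\{x_0,\ldots,x_{m-1}\})$, while continuity elsewhere follows from Lemma \ref{metric}(ii)--(iii) and the inductive hypothesis. By construction together with Lemma \ref{conv lemma}, $\sigma_m$ sends each face $[e_{i_1},\ldots,e_{i_k}]$ into $\mathrm{conv}(\{x_{i_1},\ldots,x_{i_k}\})$.

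Setting $F_i=\sigma^{-1}(G(x_i))$ for $i=0,\ldots,m$, each $F_i$ is closed in $\Delta^m$, since $\triangle$-closedness of $G(x_i)$ implies metric closedness and $\sigma$ is metrically continuous. Hypothesis (i) and the face property then give, for every face $[e_{i_1},\ldots,e_{i_k}]$ of $\Delta^m$,
\[
[e_{i_1},\ldots,e_{i_k}]\subset\sigma^{-1}\bigl(\mathrm{conv}(\{x_{i_1},\ldots,x_{i_k}\})\bigr)\subset\bigcup_{j=1}^{k}F_{i_j},
\]
which is exactly the classical KKM covering condition on $\Delta^m$. The Euclidean KKM theorem then produces some $t^{\ast}\in\bigcap_{i=0}^{m}F_i$, whence $\sigma(t^{\ast})\in\bigcap_{i=0}^{m}G(x_i)$, completing both the finite intersection step and the proof.

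The main obstacle will be the construction and continuity of $\sigma$: a Hadamard space has no intrinsic affine structure, so one must rely on the iterated geodesic scheme above and verify continuity with some care at the boundary vertex $t_m=1$, where parts (i) and (iii) of Lemma \ref{metric} do the essential work. Lemma \ref{conv lemma} plays a complementary role, forcing the iterated geodesic output to lie inside the geodesic convex hull and thereby making the face property compatible with hypothesis (i). Once these two ingredients are secured, the rest is a faithful transcription of the classical KKM argument in the Euclidean simplex.
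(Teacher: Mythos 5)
Your proposal is correct and follows essentially the same route as the paper: an inductively defined iterated-geodesic parametrization of the join of $x_0,\dots,x_m$ by the Euclidean simplex, continuity via the Lipschitz estimates of Lemma \ref{metric}, pullback of hypothesis (i) to the classical KKM covering condition, and then the finite intersection property combined with the $\triangle$-compactness of $G(x_0)$. The only cosmetic difference is that you phrase the map via normalized barycentric coordinates $(1-t_m)\sigma_{m-1}(\cdot)\oplus t_m x_m$ while the paper parametrizes the same geodesic join recursively through the sets $D_j$.
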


\begin{proof}
Take $x_1,\cdots,x_m\in K$ and define
$D(\{ x_1,\cdots,x_m\}):=\bigcup_{i=1}^m D_i$,
where $D_1=\{ x_1\}$ and, for any $2\leq j\leq n$,
$D_j=\{z\in\gamma_{x_j,y}| y\in D_{j-1}\}$ such that $\gamma_{x_j,y}$ is the geodesic joining $x_j$ to some $y\in D_{j-1}$.
Therefore $D(\{ x_1,\cdots,x_m\})$ is a closed subset of ${\rm conv}(\{x_1,\cdots,x_m\})$. Let $y_1=x_1$, in the sequel any element $y_k\in D_k \subseteq D(\{ x_1,\cdots,x_m\})$ can be written as
\begin{equation}\label{kkm1}
y_k=\gamma(t_k),
\end{equation}
where $t_k\in[0,1]$ and $\gamma$ is the geodesic joining $x_k$ to some $y_{k-1}\in D_{k-1}$. To each $x_i$, we associate a corresponding vertex $e_i$ of the simplex $\sigma=\langle e_1,\cdots,e_m\rangle \subset \Bbb R^{m+1}$. Let $T:\sigma\rightarrow D(\{ x_1,\cdots,x_m\})$ be the mapping which is defined by induction as the following form:
For $\lambda_1=e_1$, define $T(\lambda_1)=x_1$ and in the sequel, suppose that $1< k\leq m$, if $\lambda_k\in\langle e_1,\cdots,e_k\rangle \backslash\langle e_1,\cdots,e_{k-1}\rangle $, then $\lambda_k=t_k e_k+(1-t_k)\lambda_{k-1}$ for some $t_k\in(0,1]$ and $\lambda_{k-1}\in\langle e_1,\cdots,e_{k-1}\rangle $.
Hence we define
$T(\lambda_k)=\gamma_k(t_k)$,
where $\gamma_k$ is the geodesic joining $x_k$ to $T(\lambda_{k-1})$ and $t_k$ is the unique element in $[0,1]$ such that $T(\lambda_k)=\gamma_k(t_k)$.\\
The equality (\ref{kkm1}) shows that $T(\sigma)$ coincides with $D(\{ x_1,\cdots,x_m\})$. Now we show that $T$ is continuous. For any $j=1,2,$ let $\lambda^j=\sum_{i=1}^m t_i^j e_i\in\sigma$,
for some sequences $\{t_i^j\}_{i=1}^m\subset[0,1]$ satisfying $\sum_{i=1}^m t_i^j=1.$ By definition, we have that $T(\lambda^j)=\gamma_m^j(t_m^j)$, where $\gamma_m^j$ joins $x_m$ to $T(\sum_{i=1}^{m-1} t_i^j e_i)$. Now let $L:={\rm diam}(D(\langle x_1,\cdots,x_m\rangle))$, in turn by  parts (ii) and (iii) of Lemma \ref{metric}, we have:\\
$d(T(\lambda^1),T(\lambda^2))\leq d(\gamma_m^1(t_m^1),\gamma_m^1(t_m^2))+d(\gamma_m^1(t_m^2),\gamma_m^2(t_m^2))$\\
$\leq |t_m^1 -t_m^2|~ d(x_m,T(\sum_{i=1}^{m-1} t_i^1 e_i))+d(T(\sum_{i=1}^{m-1} t_i^1 e_i),T(\sum_{i=1}^{m-1} t_i^2 e_i))$\\
$\leq L|t_m^1 -t_m^2|+ d(T(\sum_{i=1}^{m-1} t_i^1 e_i),T(\sum_{i=1}^{m-1} t_i^2e_i)).$\\
By recursion, we obtain that
$d(T(\lambda^1),T(\lambda^2))\leq L\sum_{i=1}^m |t_i^1-t_i^2|.$\\
This shows that the continuity of $T$.
Consider the closed sets $\{ E_i\}_{i=1}^m$, defined by $E_i:=T^{-1}(D(\{x_1,\cdots,x_m\})\cap G(x_i))$.
Let us prove that for every $I\subset\{1,\cdots, m\}$,
\begin{equation}\label{kkm2}
{\rm conv}(\{e_i| i\in I\})\subset \bigcup_{i\in I} E_i.
\end{equation}
Indeed, let $\lambda=\sum_{j=1}^k t_{i_j}e_{i_j}\in {\rm conv}(\{e_{i_1},\cdots,e_{i_k}\})$, with $\{t_{i_j}\}\subset[0,1]$
such that $\sum_{j=1}^k t_{i_j}=1$. Since, by the hypothesis:
$$T(\lambda)\in D(\{ x_{i_1},\cdots,x_{i_k}\})\subseteq {\rm conv}(\{x_{i_1},\cdots,x_{i_k}\})\subseteq\bigcup_{n=1}^k G(x_{i_n}),$$
then there exists $j\in\{1,\cdots,k\}$ for which $T(\lambda)\in G(x_{i_j})\bigcap D(\{ x_{i_1},\cdots,x_{i_k}\})$ and,
consequently, $\lambda\in E_{i_j}$. By applying $KKM$ lemma to the family $\{E_i\}_{i=1}^m$,
we get existence of a point $\hat{\lambda}\in {\rm conv}(\{e_1,\cdots,e_m\})$ such that
$\hat{\lambda}\in\bigcap_{i=1}^m E_i$, so $T(\hat{\lambda})\in\bigcap_{i=1}^m G(x_i)$.
We have already proved that the family of $\Delta$-closed sets $\{ G(x)\cap G(x_0)\}_{x\in K}$
 has the finite intersection property. Since $G(x_0)$ is $\triangle$-compact, it implies that
$\bigcap_{x\in K} G(x)=\bigcap_{x\in K} (G(x_0)\cap G(x))\neq \varnothing.$
\end{proof}

\section{\bf Existence of Solutions}

In this section, we are going to study existence of the solutions of equilibrium problems in complete CAT(0) spaces. In \cite{iu-k-s} Iusem, Kassay and Sosa proved existence of the solutions of pseudo-monotone equilibrium problems in Hilbert spaces. Now, we want to extend their results to Hadamard spaces. We assume that $X$ is a Hadamard space and $K
\subset X$ is closed and convex. Let $o \in X$ be the basepoint
and for each $n \in \mathbb N$, set $K_n=\{x\in K |
d(o,x)\leq n\}$. Since $K_n$ is nonempty for sufficiently large
$n$, without loss of generality, we may assume that $K_n$ is
nonempty for all $n\in \mathbb N$. Suppose that $f$ satisfies
$P1$, $P2$ and $P3$. We define for each $y\in K$,
$$L_f(n,y):=\{x\in K_n | f(y,x)\leq 0\}.$$
By applying Lemma \ref{lem01} with $K_n$ instead of $K$, we conclude that $\underset{y\in
K_n}{\bigcap }L_f(n,y)\subseteq \{x\in K_n | f(x,y)\geq0,\forall
y \in K_n\}$, i.e. each solution of the convex feasibility
problem restricted to $K_n$  is a solution of the equilibrium
problem restricted to $K_n$. Take $K_n^\circ \subset K$, the
intersection of $K$ with the open ball of radius $n$ around $o$,
i.e. $K_n^\circ =\{x\in K | d(o,x)< n\}$. We need the following
technical lemmas for the existence result.

\begin{lemma}\label{lem21}
Let $f$ satisfy $P1$, $P2$ and $P3$. If for some $n\in \mathbb
N$ and some $\bar{x}\in\underset{y\in K_n}{\bigcap }L_f(n,y)$
there exists $\bar{y} \in K_n^\circ$ such that
$f(\bar{x},\bar{y})\leq 0$, then  $f(\bar{x},y)\geq0$ for all
$y\in K$.
\end{lemma}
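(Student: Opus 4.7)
The plan is to exploit the hypothesis $\bar y \in K_n^\circ$ (the \emph{open} ball) to push a geodesic from $\bar y$ toward an arbitrary $y\in K$ while staying inside $K_n$ for small parameters, and then use convexity of $f(\bar x,\cdot)$ to transfer the known nonnegativity on $K_n$ to the chosen $y$.

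First, I would reinterpret the hypothesis $\bar x\in\bigcap_{y\in K_n}L_f(n,y)$: it says $f(y,\bar x)\le 0$ for every $y\in K_n$, i.e., $\bar x$ solves $CFP(f,K_n)$. Since $K_n$ is closed and convex and the restriction of $f$ to $K_n\times K_n$ inherits $P1$, $P2$, $P3$, Lemma~\ref{lem01} (applied with $K$ replaced by $K_n$) yields
$$f(\bar x, z)\ge 0 \qquad \text{for all } z\in K_n.$$

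Next, fix an arbitrary $y\in K$ and form the geodesic point $z_t=(1-t)\bar y\oplus t y$ for $t\in[0,1]$. Convexity of $K$ gives $z_t\in K$, while part (i) of Lemma~\ref{metric} gives $d(o,z_t)\le (1-t)d(o,\bar y)+t\,d(o,y)$. Because $d(o,\bar y)<n$ by assumption, the right-hand side is strictly less than $n$ for all sufficiently small $t>0$, so $z_t\in K_n$ for such $t$. Applying the first step to $z=z_t$ gives $f(\bar x, z_t)\ge 0$.

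Now invoke $P3$: $f(\bar x,\cdot)$ is convex, so
$$0\;\le\; f(\bar x, z_t)\;\le\; (1-t)\,f(\bar x,\bar y)+t\,f(\bar x, y).$$
Using the hypothesis $f(\bar x,\bar y)\le 0$, this rearranges to $t\,f(\bar x,y)\ge -(1-t)f(\bar x,\bar y)\ge 0$, and since $t>0$ we conclude $f(\bar x,y)\ge 0$. As $y\in K$ was arbitrary, the lemma follows.

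The only subtle point — the reason the hypothesis uses $K_n^\circ$ rather than $K_n$ — is precisely Step~2: if $\bar y$ lay on the boundary sphere $d(o,\bar y)=n$, moving toward $y$ with $d(o,y)>n$ would immediately leave $K_n$, and we could no longer apply the CFP-to-EP conclusion at $z_t$. So the main (small) obstacle is just noticing that strict interiority buys us an open interval of admissible $t$'s; everything else is a one-line convexity argument.
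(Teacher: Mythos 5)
Your argument is correct and is precisely the ``trivial extension of Lemma 3.7 of Iusem--Sosa to geodesic spaces'' that the paper invokes without writing out: reduce to $f(\bar x,\cdot)\ge 0$ on $K_n$ via Lemma \ref{lem01}, use the strict interiority of $\bar y$ to keep $z_t=(1-t)\bar y\oplus t y$ inside $K_n$ for small $t>0$, and conclude by convexity of $f(\bar x,\cdot)$ together with $f(\bar x,\bar y)\le 0$. You have correctly identified the one point where the geodesic setting enters (Lemma \ref{metric}(i) replacing the linear triangle inequality), so nothing is missing.
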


\begin{proof}
A trivial extension of Lemma 3.7 of \cite{is} to geodesic spaces.
\end{proof}

\begin{definition}
$f:K\times K\rightarrow\mathbb{R}$ is called properly quasi-monotone, if for every finite set $A$ of $K$ and
every $y \in {\rm conv}(A)$, $\underset{x\in A}{\min} f(x,y)\leq 0.$
\end{definition}

\begin{lemma}\label{lem22}
If $f$ satisfies one of the following conditions\\
i) $f(\cdot,y)$ is quasi-concave for all $y\in K$ and $P1$ holds,\\
ii) $f(x,\cdot)$ is quasi-convex for all $x\in K$ and $P1$ and $P4^*$ hold,\\
then $f$ is properly quasi-monotone.
\end{lemma}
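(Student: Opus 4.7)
The plan is to reduce both cases to the observation that $\operatorname{conv}(A)$ is the smallest convex subset of $X$ containing $A$ (with Lemma \ref{conv lemma} making any induction on the $C_n(A)$ transparent when needed).

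For case (i), I would argue by contradiction. Suppose $y\in\operatorname{conv}(A)$ satisfied $f(x_i,y)>0$ for every $x_i\in A$. Quasi-concavity of $f(\cdot,y)$ makes the strict upper level set $U:=\{x\in K:f(x,y)>0\}$ convex, since $u,v\in U$ gives
\[
f(\lambda u\oplus(1-\lambda)v,y)\geq\min\{f(u,y),f(v,y)\}>0.
\]
Because $A\subseteq U$ and $U$ is convex, $\operatorname{conv}(A)\subseteq U$, hence $y\in U$, i.e.\ $f(y,y)>0$. This contradicts $P1$, so some $x_i\in A$ must satisfy $f(x_i,y)\leq 0$.

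For case (ii), the naive analogue of case (i)---use $P4^{*}$ to get $f(y,x_i)\leq 0$, then propagate via quasi-convexity of $f(y,\cdot)$---only yields $f(y,y)\leq 0$, which is compatible with $P1$ rather than contradicting it, because $P4^{*}$ delivers a non-strict inequality. The workaround I plan to use is to pass to the dual bifunction $g:K\times K\to\mathbb{R}$ defined by $g(u,v):=-f(v,u)$. Then $g(u,u)=-f(u,u)=0$, so $g$ satisfies $P1$, and $g(\cdot,v)=-f(v,\cdot)$ is quasi-concave because $f(v,\cdot)$ is quasi-convex. Case (i) applied to $g$ produces, for any $y\in\operatorname{conv}(A)$, some $x_i\in A$ with $g(x_i,y)\leq 0$, equivalently $f(y,x_i)\geq 0$. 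An application of $P4^{*}$ (with first slot $y$, second slot $x_i$) then gives $f(x_i,y)\leq 0$, which is exactly the inequality needed for proper quasi-monotonicity.

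The main obstacle is the asymmetry between the two cases: quasi-concavity of $f(\cdot,y)$ directly yields convexity of the strict upper level set $\{f(\cdot,y)>0\}$ and hence the clean contradiction with $P1$, whereas quasi-convexity of $f(x,\cdot)$ combined with the non-strict conclusion of $P4^{*}$ can only fill $\operatorname{conv}(A)$ with the non-strict level set $\{f(y,\cdot)\leq 0\}$. Recognizing that the correct device is the dual $g(u,v)=-f(v,u)$, which turns quasi-convexity in the second variable into quasi-concavity in the first variable and lets $P4^{*}$ be used productively after applying case (i), is what makes case (ii) go through.
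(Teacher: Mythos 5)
Your proposal is correct. Part (i) is essentially the paper's argument recast in level-set language: the paper propagates the strict inequality $f(\cdot,y)>\lambda>0$ through the sets $C_n(A)$ of Lemma \ref{conv lemma} by induction, which is exactly the statement that the strict upper level set $\{x:f(x,y)>0\}$ is convex and hence contains ${\rm conv}(A)$; both land on $f(y,y)>0$ against $P1$. Part (ii) is where you genuinely diverge. You correctly identify the obstacle --- $P4^*$ only gives the non-strict $f(y,x_i)\leq 0$, so the naive propagation ends at $f(y,y)\leq 0$, which is no contradiction --- but you resolve it differently. The paper restores strictness directly: under the contrary assumption $f(x_i,y)>0$, if some $f(y,x_i)=0$ then $f(y,x_i)\geq 0$ and a second application of $P4^*$ forces $f(x_i,y)\leq 0$, a contradiction; hence $f(y,x_i)<0$ for all $i$, and the strict inequality propagates as in (i). You instead dualize, applying case (i) to $g(u,v):=-f(v,u)$ (which inherits $P1$ and quasi-concavity in the first slot) to produce some $x_i$ with $f(y,x_i)\geq 0$, and then invoke $P4^*$ once to conclude $f(x_i,y)\leq 0$. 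Your reduction is arguably cleaner --- it avoids the case split on $f(y,x_i)=0$ and reuses part (i) as a black box --- while the paper's version keeps the two cases structurally parallel and makes explicit where strictness is needed. Both are complete proofs.
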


\begin{proof}
 Let $ A=\{x_0, x_1, \cdots, x_k\} \subset K$ and  $y \in {\rm conv}(A)$ be arbitrary. By Lemma \ref{conv lemma}, there is an integer $n\geq0$ such that $y\in C_n(A)$. We will show $\underset{x\in A}{\min} f(x,y)\leq 0$. Suppose to the contrary, $f(x,y)>\lambda>0,\ \ \forall x\in A$. \\
(i) By quasi-convexity of $-f(\cdot,y)$ and the definition of $C_1(A)$, we have:
$$-f(u,y)\leq \max \{-f(x,y) | x\in A\} <-\lambda<0, \ \ \forall u\in C_1(A)$$
Again by the definition of $C_2(A)$ we get
$$-f(v,y)\leq \max \{-f(u,y) | u\in C_1(A)\} \leq-\lambda<0, \ \ \forall v\in C_2(A)$$
and finally by induction we get $0=-f(y,y)\leq-\lambda<0$, which is a contradiction.\\
ii)
By using $P4^*$, we have $f(y,x_i)\leq 0 $, for  all $i\in\{0,\cdots,k\}$. Now if $f(y,x_i)=0 $ for some $i$,
then again by using $P4^*$, we have $f(x_i,y)\leq 0 $, that is a contradiction.
Therefore $f(y,x_i)< 0 $ for all $0\leq i \leq k$. Next by the quasi-convexity of $f(y,\cdot)$ and Lemma \ref{conv lemma} and similar to the part (i) we get a contradiction, which proves the lemma.
\end{proof}

\begin{theorem}\label{lem23}
Suppose that $f$ is properly quasi-monotone and $P1$, $P2$, $P3$ and $P5$ hold, then $EP(f,K)$ admits a solution.
\end{theorem}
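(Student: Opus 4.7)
The plan is to combine the KKM lemma (Lemma \ref{lem kkm}) applied to each bounded slice $K_n$ with the coercivity condition $P5$ and the boundary-to-interior transfer result of Lemma \ref{lem21}.

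First, fix $n\in\mathbb{N}$ and define $G:K_n\rightarrow 2^{K_n}$ by $G(y):=L_f(n,y)=\{x\in K_n:f(y,x)\leq 0\}$. Since $f(y,\cdot)$ is convex and lsc by $P3$, each $G(y)$ is closed and convex, hence $\triangle$-closed, and being a subset of the bounded closed convex set $K_n$, it is $\triangle$-compact by Lemma \ref{compact}. For the KKM condition, given $y_1,\ldots,y_m\in K_n$ and $z\in{\rm conv}(\{y_1,\ldots,y_m\})$, proper quasi-monotonicity applied to $A=\{y_1,\ldots,y_m\}$ yields $\min_i f(y_i,z)\leq 0$, so $z\in G(y_{i_0})$ for some $i_0$; this is exactly the hypothesis (i) of Lemma \ref{lem kkm}. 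Taking $x_0$ to be any point of $K_n$ gives hypothesis (ii). Therefore there exists $\bar{x}_n\in\bigcap_{y\in K_n}L_f(n,y)$, and then by the discussion preceding Lemma \ref{lem21} (which is Lemma \ref{lem01} applied with $K_n$ in place of $K$), $\bar{x}_n$ is an equilibrium point for $f$ on $K_n$, i.e.\ $f(\bar{x}_n,y)\geq 0$ for every $y\in K_n$.

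Next, I would promote $\bar{x}_n$ to a solution on the whole set $K$. Two cases arise. If for some $n$ we have $d(o,\bar{x}_n)<n$, then $\bar{x}_n\in K_n^\circ$; using $P1$ to write $f(\bar{x}_n,\bar{x}_n)=0\leq 0$ and taking $\bar{y}=\bar{x}_n$ in Lemma \ref{lem21}, we conclude that $f(\bar{x}_n,y)\geq 0$ for every $y\in K$, and we are done. If instead $d(o,\bar{x}_n)=n$ for all $n$, then $d(o,\bar{x}_n)\rightarrow\infty$, so the coercivity condition $P5$ produces $u\in K$ and $n_0$ with $f(\bar{x}_n,u)\leq 0$ for all $n\geq n_0$. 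Choose $N\geq n_0$ so large that $d(o,u)<N$, that is $u\in K_N^\circ$. Applying Lemma \ref{lem21} with $\bar{x}=\bar{x}_N$ and $\bar{y}=u$ yields $f(\bar{x}_N,y)\geq 0$ for every $y\in K$, completing the proof.

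The main obstacles I foresee are purely bookkeeping: verifying that each $L_f(n,y)$ is genuinely $\triangle$-closed and $\triangle$-compact in the Hadamard setting (which reduces to the standard facts that convex lsc sets in CAT(0) spaces are $\triangle$-closed, and that bounded closed convex subsets are $\triangle$-compact by Lemma \ref{compact}), and double-checking that proper quasi-monotonicity is precisely what is needed for the KKM covering condition in Lemma \ref{lem kkm}. The geometric content of the argument is concentrated in the KKM step; the coercivity/boundary case analysis and the extension from $K_n$ to $K$ are direct applications of Lemmas \ref{lem01} and \ref{lem21} and follow essentially the Hilbert-space scheme of \cite{iu-k-s}.
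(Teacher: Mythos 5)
Your proposal is correct and follows essentially the same route as the paper: KKM (Lemma \ref{lem kkm}) on each slice $K_n$ with $G(y)=L_f(n,y)$, proper quasi-monotonicity for the covering condition, $\triangle$-compactness from Lemma \ref{compact}, and the two-case analysis via $P5$ and Lemma \ref{lem21}. Your explicit choice of $\bar{y}=\bar{x}_n$ (using $P1$) in the first case is a slightly more detailed invocation of Lemma \ref{lem21} than the paper gives, but the argument is the same.
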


\begin{proof}
Let $n\in \mathbb N$ be arbitrary, we are going to use Lemma \ref{lem kkm} with $K_n$
instead of $K$ and $G(y):=L_f(n,y)$. Therefore we must check the validity of its hypotheses.
First we verify condition (i) of Lemma \ref{lem kkm}.\\
Take $x_0, x_1, \cdots, x_k \in K_n$ and $\bar{x}\in {\rm conv}(\{x_0,x_1,\cdots,x_k\})$.
 We must verify that $\bar{x}\in \overset{k} {\underset{i=0}{\bigcup}}L_f(n,x_i)$, i.e. $\bar{x}\in K_n$
  and $f(x_i,\bar{x})\leq 0$ for some $i$. Since $K_n$ is convex, therefore $\bar{x}\in K_n$ and
  the rest of this fact is followed from properly quasi-monotonicity assumption, which guarantees that $\underset{0\leq i\leq k}{\min} f(x_i,\bar{x})\leq 0$.\\
Now we verify condition (ii) of Lemma \ref{lem kkm}. Since $f(y,\cdot)$ is convex and lower semicontinuous, therefore
$G(y)=L_f(n,y)=\{x\in K_n | f(y,x)\leq0\}$ is closed and convex. Also $G(y)$ is bounded,
because it is contained in $K_n$. Hence by Lemma \ref{compact}, $G(y)$ is $\triangle$-compact,
for all $y\in K$. Therefore we are within the hypotheses of Lemma \ref{lem kkm}
and we can conclude that $\underset{y\in K_n}{\bigcap }L_f(n,y)\neq\varnothing$, for each $n\in \mathbb N$,
so that for each $n\in \mathbb N$ we may choose $x_n\in \underset{y\in K_n}{\bigcap }L_f(n,y)$. We distinguish two cases:\\
i)
There is $n\in \mathbb N$ such that $d(o,x_n)< n$. In this case $x_n \in K_n^\circ$
solves $EP(f,K)$ by Lemma \ref{lem21}.\\
ii)
$d(o,x_n)=n$ for all $n\in \mathbb N$. In this case $P5$ ensure existence of
 $u\in K$ and $n_0>0$ such that $f(x_n,u)\leq0$ for all $n\geq n_0$. Take $n'\geq n_0$ such
 that $d(o,u)<n'$, then $f(x_{n'},u)\leq0$ and $u\in K_{n'}^\circ$.
Again $x_{n'}$ turns out be a solution of $EP(f,K)$ by Lemma \ref{lem21}.
\end{proof}

\begin{theorem}\label{lem25}
Let $f$ satisfy $P1$, $P2$, $P3$ and $P4^*$, then $EP(f,K)$ has a solution if and only if $P5$ holds.
\end{theorem}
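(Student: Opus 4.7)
My plan is to split the equivalence into its two implications, handling each by assembling already-proved pieces from the excerpt.

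For the forward direction (\emph{$P5$ implies existence}), the strategy is to reduce to Theorem \ref{lem23}. That theorem needs $P1$, $P2$, $P3$, $P5$, plus proper quasi-monotonicity. We have the first four by hypothesis, so I only need to establish proper quasi-monotonicity from $P1$, $P3$, $P4^*$. Here the key observation is that $P3$ asserts convexity of $f(x,\cdot)$, which immediately gives quasi-convexity of $f(x,\cdot)$, so part (ii) of Lemma \ref{lem22} applies and yields proper quasi-monotonicity. Invoking Theorem \ref{lem23} finishes this direction.

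For the reverse direction (\emph{existence implies $P5$}), the idea is even cleaner: the solution itself witnesses $P5$. Pick any $x^{*}\in S(f;K)$ and let $\{x_k\}\subset K$ be an arbitrary sequence with $d(x_k,o)\to +\infty$. Since $x^{*}$ solves $EP(f,K)$, we have $f(x^{*},x_k)\geq 0$ for every $k$. Applying $P4^*$ gives $f(x_k,x^{*})\leq 0$ for every $k$. Thus, choosing $u=x^{*}$ and $n_0=1$ verifies $P5$.

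I do not expect any genuine obstacle, because the statement is essentially a packaging of tools already assembled in the preceding pages: Lemma \ref{lem22}(ii) converts $P4^*$ into proper quasi-monotonicity under $P1$ and $P3$, and Theorem \ref{lem23} then provides existence under $P5$. The converse direction is immediate from the definition of $P4^*$, once one realizes that any equilibrium point itself serves as the coercivity witness $u$ for every sequence escaping to infinity. The write-up will therefore be short: two paragraphs, each essentially citing one earlier result.
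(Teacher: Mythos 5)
Your proposal is correct and follows essentially the same route as the paper: the converse uses an equilibrium point $x^{*}$ as the coercivity witness $u$ via $P4^{*}$, and the forward direction combines Lemma \ref{lem22}(ii) (proper quasi-monotonicity from $P1$, $P4^{*}$, and the quasi-convexity supplied by $P3$) with Theorem \ref{lem23}. Your explicit remark that convexity in $P3$ yields the quasi-convexity needed for Lemma \ref{lem22}(ii) is a small clarification the paper leaves implicit.
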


\begin{proof}
Take $x^*\in S(f,K)$, then $f(x^*,y)\geq 0$ for all $y\in K$. By $P4^*$ we have $f(y,x^*)\leq 0$ for all $y\in K$. Hence $P5$ holds.\\
Now, by Lemma \ref{lem22}, $P1$ and $P4^*$ imply that $f$ is properly quasi-monotone, then by Theorem \ref{lem23}, $EP(f,K)$ has a solution if $P5$ holds.
\end{proof}

The following theorem also shows the existence of solutions for some equilibrium problems.
It has been essentially proved on finite dimensional Hadamard manifolds in \cite{vitto} and we rewrite the proof in Hadamard spaces.

\begin{theorem}
Let $f:K\times K\rightarrow \mathbb R$ be a bifunction such that\\
i) for any $x\in K$, $f(x,x)\geq0;$\\
ii) for every $x\in K$, the set $\{y\in K| f(x,y)< 0\}$ is convex;\\
iii) for every $y\in K$, $x\mapsto f(x,y)$ is $\triangle$-upper semicontinuous;\\
iv) there exists a $\triangle$-compact set $L\subseteq X$ and a point $y_0\in L\cap K$ such that\\
$f(x,y_0)< 0$, for all $x\in K\backslash L$,\\
then there exists a point $x_0\in L\cap K$  satisfying $f(x_0,y)\geq0$,  for all $y\in K$.
\end{theorem}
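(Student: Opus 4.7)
The plan is to recognize this as a classical Ky Fan type existence statement and reduce it to an application of the KKM lemma (Lemma \ref{lem kkm}) in its $\triangle$-version. To this end I would introduce the set-valued map $G:K\to 2^K$ defined by
$$G(y):=\{x\in K\;:\; f(x,y)\geq 0\},$$
and check the three ingredients required by Lemma \ref{lem kkm}: that each $G(y)$ is $\triangle$-closed, that $G$ is KKM, and that at least one value $G(y_0)$ is $\triangle$-compact.

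First I would verify $\triangle$-closedness of $G(y)$: if $x_n\in G(y)$ and $x_n\overset{\triangle}{\to}x$, then $x\in K$ because $K$ is $\triangle$-closed, and the $\triangle$-upper semicontinuity from hypothesis (iii) yields $f(x,y)\geq\limsup_n f(x_n,y)\geq 0$. Next I would establish the KKM property, namely that for any finite subset $\{y_1,\dots,y_m\}\subset K$ one has ${\rm conv}(\{y_1,\dots,y_m\})\subseteq\bigcup_{i=1}^m G(y_i)$. Suppose some $y$ in the convex hull fails to lie in any $G(y_i)$; then $f(y,y_i)<0$ for every $i$, so each $y_i$ lies in the set $\{z\in K:f(y,z)<0\}$, which is convex by hypothesis (ii). Since this convex set contains $\{y_1,\dots,y_m\}$, it also contains ${\rm conv}(\{y_1,\dots,y_m\})$, hence $y$ itself, forcing $f(y,y)<0$ and contradicting hypothesis (i).

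For the $\triangle$-compactness hypothesis in Lemma \ref{lem kkm}, I would use (iv): the relation $f(x,y_0)<0$ for every $x\in K\setminus L$ shows that $G(y_0)\subseteq L$, and since $G(y_0)$ is $\triangle$-closed while $L$ is $\triangle$-compact, $G(y_0)$ is $\triangle$-compact as well. With all the hypotheses of Lemma \ref{lem kkm} verified, I conclude $\bigcap_{y\in K}G(y)\neq\varnothing$. Any $x_0$ in this intersection satisfies $f(x_0,y)\geq 0$ for all $y\in K$, and moreover $x_0\in G(y_0)\subseteq L$, so $x_0\in L\cap K$, which is the desired conclusion.

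The argument is short and the only subtlety I expect to manage carefully is the KKM step: one has to invoke the definition of ${\rm conv}$ (smallest convex set, equivalently the iterated geodesic construction of Lemma \ref{conv lemma}) to pass from the finite collection $\{y_1,\dots,y_m\}$ lying in the convex sublevel set to the whole convex hull lying there. Everything else is a routine translation of the standard Hilbert/topological vector space Ky Fan proof into the $\triangle$-framework provided by the preceding lemmas.
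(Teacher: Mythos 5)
Your proposal is correct and follows essentially the same route as the paper: the same map $G(y)=\{x\in K: f(x,y)\geq 0\}$, the same verification of $\triangle$-closedness via (iii), the same contradiction argument for the KKM property using the convexity of $\{z\in K: f(x',z)<0\}$ and hypothesis (i), and the same use of (iv) to get $G(y_0)\subseteq L$ before invoking Lemma \ref{lem kkm}. Your added remark that $G(y_0)$ is $\triangle$-compact because it is a $\triangle$-closed subset of the $\triangle$-compact set $L$ is a slightly more explicit justification of a step the paper states without comment.
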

\begin{proof}
$G:K\rightarrow 2^K$ is defined by $G(y):=\{x\in K |f(x,y)\geq0\}$, for each $y\in K$. Since $f(\cdot,y)$ is $\triangle$-upper semicontinuous, $G(y)$ is $\triangle$-closed for all $y\in K$. In turn by condition (iv) there exists a point $y_0\in K$  such that $G(y_0)\subseteq L$, so $G(y_0)$ is $\triangle$-compact. We are going to use Lemma \ref{lem kkm}, thus we must prove that for every $y_1, \cdots ,y_m \in K$, we have ${\rm conv}(\{y_1, \cdots ,y_m\})\subset\bigcup_{i=1}^mG(y_i).$\\
To this end, suppose to the contrary that there exists a point $x'$ such that $x' \in {\rm conv}(\{y_1, \cdots ,y_m\})$ but $x'\not\in \bigcup_{i=1}^mG(y_i)$, i.e. $$f(x',y_i)< 0, \  \  \   \ 1\leq i\leq m.$$
This implies that for all $1\leq i\leq m$, we have $y_i \in \{y\in K | f(x',y)< 0\}$. Since $\{y\in K | f(x',y)< 0\}$ is convex hence we have:\\
$x' \in {\rm conv}(\{y_1, \cdots ,y_m\})\subseteq \{y\in K | f(x',y)< 0\}$, but by (i), we have $f(x',x')\geq0$, which is a contradiction.
Then by Lemma \ref{lem kkm}, there exists $x_0\in K$ such that $x_0 \in \bigcap_{y\in K} G(y)$, with $x_0\in G(y_0) \subseteq L\cap K$. In the other words there exists a point $x_0\in L\cap K$  satisfying $f(x_0,y)\geq0$,  for all $y\in K$.
\end{proof}

\section{\bf An Auxiliary Problem}
\noindent
In this section, we consider the proximal point scheme for pseudo-monotone equilibrium problems in Hadamard spaces to
approximate an equilibrium point. The proximal point algorithm for pseudo-monotone bifunction $f:K\times K\rightarrow \mathbb R$ generates the sequence $\{x_k\}$ which is given by the following process. Given $x_0\in X$ arbitrary, inductively for $x_{k-1}\in K$, $x_k$ satisfies in the following inequality
\begin{equation}
f(x_k,y)+\lambda_{k-1} \langle  \overrightarrow{x_{k-1}x_k },\overrightarrow{x_ky }\rangle \geq0, \   \    \ \forall y \in K,
\label{ppa}\end{equation}
where $ \{\lambda_k\}$ is a positive sequence. When $X$ is a Hilbert space and $f$ is $\theta$-undermonotone and  $\lambda_k> \theta, \   \forall k \in \mathbb N$, Iusem and Sosa in \cite{al-wi} proved existence and uniqueness of the sequence generated by (\ref{ppa}). They also proved the weak convergence of the sequence to an equilibrium point of $f$, when $f$ is a pseudo-monotone bifunction. Unfortunately, we cannot obtain existence of the sequence $\{x_k\}$ defined by (\ref{ppa}) in general Hadamard spaces for each bifunction $f$ with the usual conditions $P1$, $P2$, $P3$, $P4$, $P4^*$ and $P4^{\bullet}$ discussed in Section 2. In \cite{kr2} the first author and Ranjbar proved the existence of the sequence defined by (\ref{ppa}) and its $\Delta$-convergence for bifunction $f(x,y)=\langle \overrightarrow{Txx},\overrightarrow{xy}\rangle$, where $T:X\rightarrow X$ is a nonexpansive mapping. In this section, we study the existence of the sequence given by \eqref{ppa} in some other cases. In order to prove existence and uniqueness of the sequence $\{x_k\}$ satisfying \eqref{ppa}, consider the bifunction $\tilde{f}$ which is defined by \begin{equation}
\tilde f(x,y)= f(x,y)+\lambda \langle  \overrightarrow{\bar{x}x },\overrightarrow{xy }\rangle .
\label{ftilde}\end{equation} where $\bar{x}\in X$ and $f$ is a bifunction that satisfies $P1$, $P2$, $P3$ and $P4^{\bullet}$ and $\lambda>\theta$. First we prove the uniqueness of the sequence $\{x_k\}$ satisfying \eqref{ppa}.
Assume that both $x'$ and $x''$ solve $EP(\tilde f, K)$. Note that
$$0\leq \tilde f(x',x'')= f(x',x'')+\lambda \langle  \overrightarrow{\bar{x}x' },\overrightarrow{x'x''}\rangle ,$$
$$0\leq \tilde f(x'',x')= f(x'',x')+\lambda \langle  \overrightarrow{\bar{x}x'' },\overrightarrow{x''x'}\rangle .$$
By summing the both sides of the above inequalities, we have:
$$0\leq f(x',x'')+f(x'',x')-\lambda d^2(x',x'')\leq (\theta-\lambda)d^2(x',x'').$$
Since $\lambda > \theta$, we deduce that $x'=x''$.

\begin{lemma}\label{lem13}
Let $f$ satisfy $P1$, $P2$, $P3$, $P4^{\bullet}$ and $\lambda > \theta$, then $\tilde f$ satisfies $P4$ and $P5$.
\end{lemma}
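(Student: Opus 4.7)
The plan is to verify $P4$ and $P5$ for $\tilde{f}$ separately; both are direct calculations, with the penalization term $-\lambda d^2$ doing the key work in $P5$.

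\textbf{For $P4$:} I would compute $\tilde{f}(x,y) + \tilde{f}(y,x)$ directly. Its $f$-component is $\leq \theta d^2(x,y)$ by $P4^{\bullet}$. For the quasi-linearization bracket, additivity in the first slot gives
\[
\langle \overrightarrow{\bar{x}x}, \overrightarrow{xy}\rangle = \langle \overrightarrow{\bar{x}y}, \overrightarrow{xy}\rangle + \langle \overrightarrow{yx}, \overrightarrow{xy}\rangle,
\]
and antisymmetry (consequence of $\langle \overrightarrow{ab}, \overrightarrow{cd}\rangle = -\langle \overrightarrow{ba}, \overrightarrow{cd}\rangle$ combined with the symmetry of the two slots) yields $\langle \overrightarrow{\bar{x}y}, \overrightarrow{yx}\rangle = -\langle \overrightarrow{\bar{x}y}, \overrightarrow{xy}\rangle$. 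The cross terms cancel, leaving $\langle \overrightarrow{yx}, \overrightarrow{xy}\rangle = -d^2(x,y)$. Combining everything, $\tilde{f}(x,y) + \tilde{f}(y,x) \leq (\theta - \lambda) d^2(x,y) \leq 0$ since $\lambda > \theta$.

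\textbf{For $P5$:} Fix any $u \in K$; for a sequence $\{x_n\} \subset K$ with $d(x_n, o) \to \infty$, the triangle inequality forces $d(x_n, u) \to \infty$. I would expand
\[
\langle \overrightarrow{\bar{x}x_n}, \overrightarrow{x_n u}\rangle = \langle \overrightarrow{\bar{x}u}, \overrightarrow{x_n u}\rangle + \langle \overrightarrow{u x_n}, \overrightarrow{x_n u}\rangle = \langle \overrightarrow{\bar{x}u}, \overrightarrow{x_n u}\rangle - d^2(x_n, u),
\]
and apply Cauchy--Schwarz to the first summand to get $\lambda \langle \overrightarrow{\bar{x}x_n}, \overrightarrow{x_n u}\rangle \leq \lambda d(\bar{x}, u)\, d(x_n, u) - \lambda d^2(x_n, u)$. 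From $P4^{\bullet}$, $f(x_n, u) \leq \theta d^2(x_n, u) - f(u, x_n)$. Since $f(u, \cdot)$ is convex and lsc on $K$ by $P3$, it admits an affine lower bound $f(u, y) \geq -a\,d(y, u) - b$ for constants $a \geq 0$, $b \in \mathbb{R}$ (obtained from a subgradient of $f(u, \cdot)$). Summing the estimates gives
\[
\tilde{f}(x_n, u) \leq (\theta - \lambda) d^2(x_n, u) + \bigl(\lambda d(\bar{x}, u) + a\bigr) d(x_n, u) + b,
\]
and the right-hand side tends to $-\infty$ because $\theta - \lambda < 0$ and $d(x_n, u) \to \infty$. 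Hence $\tilde{f}(x_n, u) \leq 0$ for all sufficiently large $n$, which is $P5$ with this choice of $u$.

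The main obstacle I anticipate is securing the affine lower bound for $f(u, \cdot)$ in the CAT(0) setting. In a Hilbert space it is immediate from the nonempty subdifferential of any proper convex lsc function; in a Hadamard space one must appeal to the corresponding subdifferential theory for convex lsc functions on CAT(0) spaces, combined with the Cauchy--Schwarz inequality to convert the subgradient inequality into the stated linear bound in $d(y, u)$. Everything else reduces to routine bookkeeping with the quasi-linearization identities already recorded in the paper.
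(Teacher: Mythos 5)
Your proof is correct and follows the same overall strategy as the paper's: the $P4$ computation is identical, and the $P5$ argument rests on the same key external input, namely an affine minorant for the convex, lsc function $f(u,\cdot)$ obtained from the subdifferential/duality theory in Hadamard spaces (the paper invokes Lemma 3.2 of \cite{aka} exactly as you anticipate, then converts the subgradient inequality via Cauchy--Schwarz into a bound linear in $d(u,x_k)$). The one genuine difference is in how the cross term $\langle \overrightarrow{\bar{x}u},\overrightarrow{x_nu}\rangle$ is handled: the paper chooses the specific point $u=P_K(\bar{x})$ so that the projection characterization (Theorem 2.2 of \cite{dehghan}) makes this term nonpositive and it can simply be dropped, whereas you take an arbitrary $u\in K$ and bound the term by $d(\bar{x},u)\,d(x_n,u)$ via Cauchy--Schwarz, absorbing it into the linear part that is dominated by $(\theta-\lambda)d^2(x_n,u)$. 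Your route is marginally more general (it shows the $P5$ inequality holds eventually for \emph{every} $u\in K$, not just the projection) and avoids one citation; the paper's choice buys a slightly cleaner intermediate inequality. Both are valid, and your identification of the affine lower bound as the only nontrivial ingredient is exactly right.
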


\begin{proof}First we prove that $\tilde f$ satisfies $P4$. Note that\\
$\tilde f(x,y)+ \tilde f(y,x)= f(x,y)+ f(y,x)+\lambda \langle  \overrightarrow{\bar{x}x },\overrightarrow{xy }\rangle +\lambda \langle  \overrightarrow{\bar{x}y },\overrightarrow{yx }\rangle  \\
=f(x,y)+ f(y,x)-\lambda d^2(x,y) \leq (\theta-\lambda)d^2(x,y) \leq 0$.\\
Now we show that $\tilde f$ satisfies $P5$. Take and fix $o \in X$, then take a sequence $\{x_k\}$ such that $\lim d(o,x_k)=+\infty$ and let $u=P_K(\bar{x})$, where $P_K :X\longrightarrow K$ is the projection map onto $K$. Since by Theorem 2.2 of \cite{dehghan}, we have, $\langle  \overrightarrow{\bar{x}u },\overrightarrow{x_ku }\rangle \leq0$. Therefore we have:\\
$\tilde f(x_k,u)= f(x_k,u)+\lambda \langle  \overrightarrow{\bar{x}x_k },\overrightarrow{x_ku }\rangle = f(x_k,u)+\lambda \langle  \overrightarrow{\bar{x}u },\overrightarrow{x_ku }\rangle +\lambda \langle  \overrightarrow{ux_k },\overrightarrow{x_ku }\rangle  \\
\leq f(x_k,u)-\lambda d^2(u,x_k) \leq -f(u,x_k)+\theta d^2(u,x_k)-\lambda d^2(u,x_k)$
\begin{equation}
= -f(u,x_k)-(\lambda-\theta)d^2(u,x_k),
\label{q5}\end{equation}
where in the second inequality we have used from $\theta$-undermonotonicity of $f$.
Now take $z$ in the domain of $f(u,\cdot)$ and $t \in \mathbb{R}$ with $t< f(u,z)$, since $f(u,\cdot)$ is convex, proper and lower semicontinuous, by Lemma 3.2 of \cite{aka} there are $v\in X$ and a real number $t< s \leq f(u,z)$ such that
$$f(u,y)\geq \frac{1}{s-t}\langle  \overrightarrow{vz },\overrightarrow{vy }\rangle +s \   \    \     \ \forall y\in K,$$
therefore by setting $y=x_k$, we have: $-f(u,x_k)\leq \frac{-1}{s-t}\langle  \overrightarrow{vz },\overrightarrow{vx_k }\rangle -s$. Now by Cauchy- Schwarz inequality, we have:
\begin{equation}
-f(u,x_k)\leq \frac{1}{s-t}d(v,z)d(v,x_k)-s \leq \frac{1}{s-t}d(v,z)d(u,x_k)+\frac{1}{s-t}d(v,z)d(v,u)-s.
\label{q5'}\end{equation}
By replacing (\ref{q5'}) in (\ref{q5}), we have:
\begin{equation}
\tilde f(x_k,u)\leq d(x_k,u)[\frac{1}{s-t}d(v,z)-(\lambda-\theta)d(u,x_k)]+ \frac{1}{s-t}d(v,z)d(v,u)-s.
\label{q5''}\end{equation}
Since $\lambda-\theta > 0$ and $\lim d(x_k,o)=+\infty$, so that $\lim d(x_k,u)=+\infty$,
it follows easily from (\ref{q5''}) that $\lim \tilde f(x_k,u)=-\infty$ as $k\rightarrow +\infty$. So that $\tilde f(x_k,u)\leq 0$, for sufficiently large $k$. Therefore $\tilde f$ satisfies $P5$.
\end{proof}

\begin{proposition}\label{convex}
Let $f$ satisfy $P1$, $P2$, $P3$, $P4^{\bullet}$ and $\lambda > \theta$. If $\tilde f(x,\cdot)$ is convex for all $x\in K$, then $EP(\tilde f,K)$ has a unique solution.
\end{proposition}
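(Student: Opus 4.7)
The plan is to split the proof into existence and uniqueness, noting that the uniqueness half has essentially already been carried out in the paragraph preceding Lemma \ref{lem13}. For existence, I would aim to apply Theorem \ref{lem23} directly to the bifunction $\tilde f$, so the main task is to verify that $\tilde f$ meets every hypothesis of that theorem: namely $P1$, $P2$, $P3$, $P5$, together with proper quasi-monotonicity.

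I would first dispose of the four ``easy'' axioms. Property $P1$ for $\tilde f$ follows immediately from $P1$ for $f$ plus the definition of the quasi-linearization, since $\langle\overrightarrow{\bar x x},\overrightarrow{xx}\rangle=0$. For $P2$, I would observe that the map $x\mapsto\langle\overrightarrow{\bar x x},\overrightarrow{xy}\rangle$ is continuous (it is a polynomial in the distances $d(\bar x,x)$, $d(\bar x,y)$, $d(x,y)$), so adding it to the upper semi-continuous function $f(\cdot,y)$ keeps upper semi-continuity. For $P3$, convexity of $\tilde f(x,\cdot)$ is given by hypothesis, and lower semi-continuity follows by the same continuity argument applied to the second slot. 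Finally, $P5$ for $\tilde f$ is exactly the second half of Lemma \ref{lem13}.

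The remaining ingredient, proper quasi-monotonicity, is where I would use Lemma \ref{lem22}(ii): I need $\tilde f(x,\cdot)$ quasi-convex (granted, since it is convex) and $\tilde f$ to satisfy $P1$ and $P4^*$. Property $P1$ was just checked. For $P4^*$, I would exploit that Lemma \ref{lem13} gives the stronger property $P4$: if $\tilde f(x,y)\ge 0$, then monotonicity forces $\tilde f(y,x)\le -\tilde f(x,y)\le 0$, so $\tilde f$ is pseudo-monotone. Thus Lemma \ref{lem22}(ii) applies and yields that $\tilde f$ is properly quasi-monotone. Assembling these facts, Theorem \ref{lem23} produces a solution $x^{*}\in K$ of $EP(\tilde f,K)$.

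For uniqueness I would simply quote (or reproduce) the two-line computation already in the text: if $x',x''$ both solve $EP(\tilde f,K)$, adding $\tilde f(x',x'')\ge 0$ and $\tilde f(x'',x')\ge 0$ and invoking $P4^{\bullet}$ and the identity $\langle\overrightarrow{\bar x x'},\overrightarrow{x'x''}\rangle+\langle\overrightarrow{\bar x x''},\overrightarrow{x''x'}\rangle=-d^{2}(x',x'')$ gives $0\le(\theta-\lambda)d^{2}(x',x'')$, forcing $x'=x''$ since $\lambda>\theta$. I do not expect a serious obstacle here; the proof is really a careful bookkeeping job. The only point that needs some care is ensuring that the quasi-linearization term in $\tilde f$ behaves well with respect to upper/lower semi-continuity in each slot separately, but this is immediate from the definition of $\langle\cdot,\cdot\rangle$ via squared distances.
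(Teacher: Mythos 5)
Your proposal is correct and follows essentially the same route as the paper: the paper checks $P1$, $P2$, $P3$ for $\tilde f$, obtains $P4$ and $P5$ from Lemma \ref{lem13}, and then cites Theorem \ref{lem25} --- whose own proof is precisely your combination of Lemma \ref{lem22}(ii) and Theorem \ref{lem23} --- with uniqueness handled by the computation preceding Lemma \ref{lem13}. You have merely unfolded Theorem \ref{lem25} into its constituent steps, so there is no substantive difference.
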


\begin{proof}
It is clear that $\tilde f$ satisfies $P1$, $P2$ and $P3$. Also Lemma \ref{lem13} shows that $\tilde f$ satisfies $P4$ and $P5$. Hence Theorem  \ref{lem25} implies that $\tilde f$ has a solution. Uniqueness of the solution has already been proved.
\end{proof}

By Proposition \ref{convex}, if $ \tilde f(x,\cdot)$ is convex for all
$x\in K$, then $EP( \tilde f,K)$ has a unique solution, but since $y\mapsto\langle \overrightarrow{\bar{x}x},\overrightarrow{xy}\rangle $ is not convex in general unless in flat Hadamard spaces, then the conditions of Proposition \ref{convex} are satisfied in these spaces and we have the following corollary.

\begin{corollary}
	Let $f$ satisfy $P1$, $P2$, $P3$, $P4^{\bullet}$ and $\lambda > \theta$. If $X$ is a flat Hadamard space, then $EP(\tilde f,K)$ has a unique solution.
\end{corollary}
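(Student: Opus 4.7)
The plan is to reduce this corollary to Proposition \ref{convex} by showing that, in a flat Hadamard space, the map $y\mapsto \tilde f(x,y)$ is convex for every fixed $x\in K$. Once this convexity is established, all hypotheses of Proposition \ref{convex} are met and the conclusion (existence and uniqueness of a solution to $EP(\tilde f,K)$) follows immediately.

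To verify convexity of $\tilde f(x,\cdot)$, write
\[
\tilde f(x,y)=f(x,y)+\lambda\,\langle \overrightarrow{\bar{x}x},\overrightarrow{xy}\rangle .
\]
By $P3$, the term $f(x,\cdot)$ is already convex, so it suffices to check that $y\mapsto\langle \overrightarrow{\bar{x}x},\overrightarrow{xy}\rangle$ is convex. In fact I would show that in the flat setting this map is \emph{affine}. Given any $z,u\in K$ and $\mu\in[0,1]$, set $y=\mu z\oplus(1-\mu)u$. Using the symmetry $\langle \overrightarrow{ab},\overrightarrow{cd}\rangle=\langle \overrightarrow{cd},\overrightarrow{ab}\rangle$ together with identity \eqref{affin inner product}, which is available precisely because $X$ is flat, I get
\[
\langle \overrightarrow{\bar{x}x},\overrightarrow{x(\mu z\oplus(1-\mu)u)}\rangle
=\mu\,\langle \overrightarrow{\bar{x}x},\overrightarrow{xz}\rangle
+(1-\mu)\,\langle \overrightarrow{\bar{x}x},\overrightarrow{xu}\rangle .
\]
Hence $y\mapsto\langle \overrightarrow{\bar{x}x},\overrightarrow{xy}\rangle$ is affine, and $\tilde f(x,\cdot)$, being the sum of a convex and an affine function, is convex.

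With convexity of $\tilde f(x,\cdot)$ in hand, I invoke Proposition \ref{convex}: since $f$ satisfies $P1$, $P2$, $P3$, $P4^{\bullet}$ and $\lambda>\theta$, the proposition guarantees that $EP(\tilde f,K)$ possesses a unique solution, which is exactly the corollary.

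The only nontrivial point is the affinity of $y\mapsto\langle \overrightarrow{\bar{x}x},\overrightarrow{xy}\rangle$; this is not available in a general Hadamard space and is precisely where flatness is used via \eqref{affin inner product}. Everything else is a direct reference to the already-established Proposition \ref{convex}, so no further delicate computation is required.
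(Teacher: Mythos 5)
Your proposal is correct and follows exactly the paper's own route: the paper justifies the corollary by noting that in a flat Hadamard space the map $y\mapsto\langle \overrightarrow{\bar{x}x},\overrightarrow{xy}\rangle$ is convex (via the affinity identity \eqref{affin inner product}), so that $\tilde f(x,\cdot)$ is convex and Proposition \ref{convex} applies. Your only addition is spelling out the affinity computation explicitly, which the paper leaves implicit.
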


If the function $y\mapsto\langle \overrightarrow{\bar{x}x},\overrightarrow{xy}\rangle $ is convex, existence of a solution for $\tilde{f}$ is concluded by the usual conditions on the bifunction $f$. But in general $y\mapsto\langle \overrightarrow{\bar{x}x},\overrightarrow{xy}\rangle $ is not convex in Hadamard spaces. In the following theorems we try to overcome this problem and prove the existence of solutions for $\tilde{f}$ in some special  cases.

In order to prove existence of an equilibrium point for $\tilde{f}$ when $f$ is cyclic monotone we recall the definition of cyclic monotonicity of bifunctions  from \cite{had-kha} and a lemma that we need to prove the main result.

\begin{definition}
	$f:K\times K\rightarrow\mathbb{R}$ is said to be cyclic monotone iff for each $n\in\mathbb{N}$ and each $x_1,x_2,\cdots, x_n\in X$ $$f(x_1,x_2)+f(x_2,x_3)+\cdots +f(x_n,x_1)\leq0$$
\end{definition}

\begin{lemma}
	Suppose that $f:K\times K\rightarrow\mathbb{R}$ is monotone and $P1$ is satisfied. Also $f$ is convex respect to the second variable and upper hemi-continuous (upper semi-continuous along geodesics) respect to the first variable. Let $\bar{x}\in K$, then the following are equivalent:\\
	i) there exists $x\in K$ such that $f(z,x)+\langle \overrightarrow{x\bar{x}},\overrightarrow{xz}\rangle \leq0,\ \ \forall z\in K$\\
	ii) there exists $x\in K$ such that $f(x,z)\geq\langle \overrightarrow{x\bar{x}},\overrightarrow{xz}\rangle ,\ \ \forall z\in K$
\end{lemma}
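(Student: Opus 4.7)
The plan is to prove the two implications separately. The implication (ii) $\Rightarrow$ (i) is immediate from monotonicity: assuming $f(x,z) \geq \langle \overrightarrow{x\bar{x}}, \overrightarrow{xz}\rangle$ for every $z\in K$, the monotonicity inequality $f(z,x)+f(x,z)\leq 0$ yields $f(z,x) \leq -f(x,z) \leq -\langle \overrightarrow{x\bar{x}}, \overrightarrow{xz}\rangle$, and the antisymmetry property $-\langle \overrightarrow{x\bar{x}}, \overrightarrow{xz}\rangle = \langle \overrightarrow{\bar{x}x}, \overrightarrow{xz}\rangle$ together with a sign rewrite gives (i). For this direction, convexity, upper hemi-continuity, and $P1$ are unused.

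For (i) $\Rightarrow$ (ii), which is the genuine content, I would carry out a Minty-type argument. Fix an arbitrary $z\in K$ and, for $t\in(0,1)$, set $z_t := (1-t)x\oplus tz$, which lies in $K$ by convexity. Applying (i) at $z_t$ produces the upper bound $f(z_t, x)\leq \langle \overrightarrow{\bar{x}x}, \overrightarrow{xz_t}\rangle$. The geometric heart of the proof is then the affinity estimate
\begin{equation*}
\langle \overrightarrow{\bar{x}x}, \overrightarrow{xz_t}\rangle \;\leq\; t\,\langle \overrightarrow{\bar{x}x}, \overrightarrow{xz}\rangle,
\end{equation*}
which I would establish by expanding both sides via the definition of quasi-linearization, using $d(x,z_t)=t\,d(x,z)$ from Lemma \ref{mid}, and bounding $d^2(\bar x, z_t)$ above by the CAT(0) inequality from Lemma \ref{metric}(iv); the cross terms telescope and the factor of $t$ emerges cleanly.

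Next I would exploit $P1$ and convexity in the second variable to get a matching lower bound on $f(z_t,x)$: from $0=f(z_t,z_t)\leq (1-t)f(z_t,x)+t\,f(z_t,z)$ one obtains $(1-t)f(z_t,x)\geq -t\,f(z_t,z)$. Chaining this with the upper bound above and the affinity estimate yields, after dividing by $t$ and rearranging,
\begin{equation*}
f(z_t,z)\;\geq\;(1-t)\,\langle \overrightarrow{x\bar{x}}, \overrightarrow{xz}\rangle.
\end{equation*}
Finally, letting $t\to 0^+$, the point $z_t$ moves along the geodesic $[x,z]$ towards $x$; upper hemi-continuity of $f(\cdot,z)$ gives $\limsup_{t\to 0^+} f(z_t,z)\leq f(x,z)$, and passing to the limit in the last displayed inequality delivers (ii).

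The step I expect to cause the most friction is the affinity estimate for the quasi-linearization, because in a non-flat Hadamard space the map $y\mapsto \langle \overrightarrow{\bar{x}x}, \overrightarrow{xy}\rangle$ is in general neither affine nor even convex, so one cannot reproduce the Hilbert-space identity. The saving point is that the CAT(0) inequality provides exactly a one-sided bound, and it happens to point in the right direction to preserve the Minty trick after combining with monotonicity and convexity. Once that estimate is in hand, the rest is a routine passage to the limit that requires only upper hemi-continuity along the geodesic $[x,z]$, precisely what is assumed.
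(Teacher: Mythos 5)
Your proposal is correct and follows essentially the same route as the paper's proof: the same Minty-type argument with $z_t = (1-t)x\oplus tz$, the expansion of the quasi-linearization combined with Lemma \ref{metric}(iv) and $d(x,z_t)=td(x,z)$ to obtain $f(z_t,z)\geq(1-t)\langle\overrightarrow{x\bar{x}},\overrightarrow{xz}\rangle$, and upper hemi-continuity to let $t\to 0^+$. Your isolation of the one-sided estimate $\langle\overrightarrow{\bar{x}x},\overrightarrow{xz_t}\rangle\leq t\langle\overrightarrow{\bar{x}x},\overrightarrow{xz}\rangle$ is only a cleaner packaging of the computation the paper performs inline (where, incidentally, the step invoking Lemma \ref{metric}(iv) is written as an equality but is really the inequality you state).
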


\begin{proof} (ii) $\Rightarrow$ (i) is trivial by the monotonicity of $f$. We prove (i) $\Rightarrow$ (ii).
	For all $z\in K$ and $0\leq t\leq1$, take $z_t=tz\oplus(1-t)x$. By convexity of $f$ respect to the second argument, we have\\
	$0=f(z_t,z_t)\leq t f(z_t,z)+(1-t)f(z_t,x)\leq tf(z_t,z)+(1-t)\langle \overrightarrow{\bar{x}x},\overrightarrow{xz_t}\rangle $\\
	$=tf(z_t,z)+\frac{1-t}{2}\{d^2(\bar{x},z_t)-d^2(x,z_t)-d^2(\bar{x},x)\}$\\
	$=tf(z_t,z)+\frac{1-t}{2}\{td^2(\bar{x},z)+(1-t)d^2(\bar{x},x)-t(1-t)d^2(z,x)-t^2d^2(z,x)-d^2(\bar{x},x)\}$\\
	$=tf(z_t,z)+\frac{t(1-t)}{2}\{d^2(\bar{x},z)-d^2(\bar{x},x)-d^2(z,x)\}$\\
	Therefore
	$$f(z_t,z)\geq(1-t)\langle \overrightarrow{x\bar{x}},\overrightarrow{xz}\rangle $$
	Letting $t\rightarrow0$, by upper hemi-continuity of $f$ respect to the first argument, we get
	$$f(x,z)\geq\langle \overrightarrow{x\bar{x}},\overrightarrow{xz}\rangle, \ \ \forall z\in K$$
	as desired.
\end{proof}

\begin{theorem}
	Let $f:K\times K\rightarrow\mathbb{R}$ be a cyclic monotone bifunction which satisfy $P1$, $P2$ and $P3$. Then $\tilde{f}$ has a solution.
\end{theorem}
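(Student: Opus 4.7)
The plan is to encode the cyclic monotone data of $f$ into a convex lsc potential $\varphi$ on $K$ (following Rockafellar's antiderivative construction), obtain the candidate equilibrium point of $\tilde{f}$ as the proximal minimum of $\varphi$ centered at $\bar{x}$, and then invoke the preceding lemma to convert the resulting convex-feasibility inequality into the equilibrium inequality for $\tilde{f}$.

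Fix any $x_0\in K$ and define
\[
\varphi(y) \;:=\; \sup\bigl\{\,f(x_0,x_1)+f(x_1,x_2)+\cdots+f(x_{n-1},y) \;:\; n\ge 1,\ x_1,\dots,x_{n-1}\in K\,\bigr\}.
\]
By $P3$, each summand in the defining family is convex and lsc in $y$, so $\varphi$ is convex and lsc. The cyclic monotone inequality for chains $x_0\to\cdots\to x_{n-1}\to x_0$ gives $\varphi(x_0)\le 0$, while $P1$ applied to the trivial chain gives $\varphi(x_0)\ge f(x_0,x_0)=0$; hence $\varphi(x_0)=0$. The Rockafellar-type inequality
\[
\varphi(y) \;\ge\; \varphi(z)+f(z,y), \qquad \forall\, z,y\in K,
\]
is immediate from the definition (by appending the edge $z\to y$ to any chain ending at $z$), and it propagates finiteness: the instance $\varphi(x_0)\ge\varphi(y)+f(y,x_0)$ yields $\varphi(y)\le -f(y,x_0)<+\infty$, so $\varphi$ is real-valued on $K$.

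Next, the proximal objective $\Phi(y):=\varphi(y)+\tfrac{\lambda}{2}d^2(\bar{x},y)$ is proper, convex, and lsc, and is $\lambda$-strongly convex by Lemma~\ref{metric}(iv); standard proximal-point theory in Hadamard spaces then yields a unique minimizer $x\in K$. Testing $x$ against $z_t=(1-t)x\oplus ty$, using convexity of $\varphi$ and Lemma~\ref{metric}(iv), I would obtain $\Phi(z_t)\le(1-t)\Phi(x)+t\Phi(y)-\tfrac{\lambda}{2}t(1-t)d^2(x,y)$; combining with $\Phi(x)\le\Phi(z_t)$, letting $t\to 0^+$, and rewriting the resulting $d^2$-differences via quasi-linearization gives
\[
\varphi(y)-\varphi(x) \;\ge\; \lambda\,\langle\overrightarrow{x\bar{x}},\overrightarrow{xy}\rangle, \qquad \forall\, y\in K.
\]

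Combining this with the Rockafellar inequality at $(z,y)=(y,x)$ yields $f(y,x)\le\varphi(x)-\varphi(y)\le -\lambda\,\langle\overrightarrow{x\bar{x}},\overrightarrow{xy}\rangle$ for all $y\in K$, which is exactly condition (i) of the preceding lemma for the rescaled bifunction $f/\lambda$. Since $f/\lambda$ remains cyclic monotone (so monotone), satisfies $P1$ and $P3$, and is upper hemi-continuous in its first argument by $P2$, the lemma applies and delivers $f(x,z)\ge -\lambda\,\langle\overrightarrow{\bar{x}x},\overrightarrow{xz}\rangle$ for every $z\in K$, i.e.\ $\tilde{f}(x,z)\ge 0$. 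The step I expect to require the most care is verifying that $\varphi$ is finite-valued on $K$: this is precisely where the cyclic monotone (rather than merely monotone) hypothesis enters, via propagation of finiteness from $\varphi(x_0)=0$ through the Rockafellar inequality.
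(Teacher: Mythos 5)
Your proof is correct and follows essentially the same route as the paper: both construct the Rockafellar-type antiderivative $g$ from cyclic monotonicity (the paper cites Propositions 5.1--5.2 of Hadjisavvas--Khatibzadeh for $f(z,x)\le g(x)-g(z)$), both obtain the candidate point as the resolvent of $g$ at $\bar{x}$ (the paper cites Theorem 4.2 of Ahmadi Kakavandi--Amini where you derive the variational inequality by hand from the proximal minimization), and both finish by invoking the preceding lemma to pass from the convex-feasibility inequality to the equilibrium inequality for $\tilde{f}$. The only cosmetic differences are that you keep $\lambda$ general where the paper normalizes $\lambda=1$, and you supply the finiteness and strong-convexity details that the paper delegates to citations.
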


\begin{proof}
	Without loss generality from now to the end of this section, we take $\lambda=1$ in $EP(\tilde{f},K)$. By similar argument of Propositions 5.1 and 5.2 of \cite{had-kha}, $f(z,x)\leq g(x)-g(z)$, where $g$ is a convex and lower semi-continuous function on $X$. By Theorem 4.2 of \cite{aka}, for a given $\bar{x}\in K$ there exists exactly one $x\in K$ such that
	$$g(x)-g(z)\leq\langle \overrightarrow{\bar{x}x},\overrightarrow{xz}\rangle ,\ \ \forall z\in K,$$
	then
	$$f(z,x)+\langle \overrightarrow{x\bar{x}},\overrightarrow{xz}\rangle \leq0,\ \ \forall z\in K$$
	Now Lemma 3.6 implies the interested result.
\end{proof}

Now we want to prove existence of an equilibrium point for $\tilde{f}$ when $f$ satisfies a cyclic pseudo-monotonicity condition. In \cite{had-sch-won} cyclic pseudomonotonicity was defined for pseudomonotone operators. In \cite{kha-moh-ali} we defined it for pseudomonotone bifunctions as follows.\\
i)  $f$\ is called $n\,$-pseudomonotone if the following implication holds:
\[
f\left(  x_{1},x_{2}\right)  \geq0,f\left(  x_{2},x_{3}\right)  \geq
0,...,f\left(  x_{n-1},x_{n}\right)  \geq0\Longrightarrow f\left(  x_{n}
,x_{1}\right)  \leq0;
\]
ii) $f$ is called  cyclic pseudomonotone, if $f$ is $n$-pseudomonotone for all
$n\in
\mathbb{N}
$.\\
In order to prove existence of solution for $\tilde{f}$, we define a stronger version of cyclic pseudo-monotonicity as follows.

\begin{definition}
	$f$\ is called $n\,$-pseudomonotone of type (I), if the following
 implication holds:
 \[
 f\left(  x_{1},x_{2}\right)  +f\left(  x_{2},x_{3}\right)  +...+f\left(
 x_{n-2},x_{n-1}\right)  \leq f\left(  x_{1},x_{n}\right)  \Longrightarrow
 f\left(  x_{n},x_{n-1}\right)  \leq0
 \]
 \end{definition}

 First we prove that the recent definition is stronger than the definition of cyclic pseudomonotonicity.

 \begin{theorem}\label{n-pseudomonotone}
 	If $f:K\times K\rightarrow\mathbb{R}$ is n-pseudomonotone of type (I) and $P_1$ is satisfied, then $f$ is n-pseudomonotone.
 \end{theorem}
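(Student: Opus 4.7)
The plan is to derive $n$-pseudomonotonicity from $n$-pseudomonotonicity of type (I) in two moves: first to degenerate the type (I) implication via $P_1$ so as to recover ordinary $2$-pseudomonotonicity of $f$, and then to invoke type (I) a second time on a \emph{reversed} tuple, where $2$-pseudomonotonicity is precisely what is needed to verify the type (I) antecedent.

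For the first move I would plug into the defining implication of type (I) the degenerate tuple $x_1=x_2=\cdots=x_{n-1}=a$, $x_n=b$, for arbitrary $a,b\in K$ with $f(a,b)\geq 0$. By $P_1$ every summand on the left-hand side equals $f(a,a)=0$, so the antecedent collapses to $0\leq f(a,b)$, which holds by assumption. The consequent $f(x_n,x_{n-1})\leq 0$ then reads $f(b,a)\leq 0$, establishing $2$-pseudomonotonicity of $f$.

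For the second move, assume $f(x_1,x_2),\ldots,f(x_{n-1},x_n)\geq 0$; the first move yields $f(x_{i+1},x_i)\leq 0$ for all $1\leq i\leq n-1$. I then apply type (I) to the reversed tuple $y_i=x_{n-i}$ for $1\leq i\leq n-1$ and $y_n=x_n$. A straightforward reindexing ($j=n-i-1$) rewrites the left-hand side of the type (I) antecedent as
$$\sum_{i=1}^{n-2} f(y_i,y_{i+1})=\sum_{j=1}^{n-2} f(x_{j+1},x_j)\leq 0,$$
while the right-hand side equals $f(y_1,y_n)=f(x_{n-1},x_n)\geq 0$. Hence the antecedent holds, and the type (I) consequent $f(y_n,y_{n-1})\leq 0$ reads $f(x_n,x_1)\leq 0$, which is exactly the $n$-pseudomonotonicity conclusion.

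The only nontrivial choice is the permutation used in the second move: reversing the first $n-1$ indices (while keeping $x_n$ fixed in the last slot) is what converts the ``wrong-direction'' data $f(x_i,x_{i+1})\geq 0$ into summands $f(x_{j+1},x_j)$ controlled by $2$-pseudomonotonicity, and simultaneously places one of the given non-negative quantities on the right-hand side of the antecedent. Once this reversal is spotted and the first move has delivered $2$-pseudomonotonicity out of $P_1$, no further ideas are required.
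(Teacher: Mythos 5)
Your proposal is correct and follows essentially the same route as the paper: both first extract ordinary pseudo-monotonicity ($P4^*$) from $P_1$ together with the type (I) implication, and then apply type (I) to the reversed tuple $(x_{n-1},x_{n-2},\dots,x_1,x_n)$, whose antecedent sum is nonpositive by $P4^*$ while the right-hand side $f(x_{n-1},x_n)$ is nonnegative by hypothesis. Your write-up merely makes explicit the degenerate-tuple argument for the first step, which the paper leaves implicit.
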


 \begin{proof}
 	Take $x_1,x_2,...,x_n\in K$, and let $f(x_1,x_2)\geq0$, $f(x_2,x_3)\geq0$, ... , $f(x_{n-1},x_n)\geq0$. Since $P1$ and n-pseudomonotonicity of type (I) imply that $P_4^*$, hence we have:\\
 	$f(x_{n-1},x_{n-2})+ f(x_{n-2},x_{n-3})+... + f(x_2,x_1)\leq f(x_{n-1},x_n)$.\\
 	Now n-pseudomonotonicity of type (I) implies $f(x_n,x_1)\leq0$, as desired.
 \end{proof}

\begin{proposition}\label{gx-gy}
	Assume that $f:K\times K\rightarrow
	\mathbb{R}
	$ is cyclic pseudomonotone of type (I), and there are $u,v\in  K$ such that $f(u,v)> 0$. Then\\
	(i) there exists a function $g:K\rightarrow
	\mathbb{R}
	$ such that $f\left(  x,y\right)  \geq g\left(  y\right)-g\left(  x\right)
	;$\\
	(ii) if $f\left(  \cdot,y\right)  $ is concave for each $y\in K,$ then $g$ is convex.
\end{proposition}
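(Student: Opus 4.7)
The plan is to adapt the classical Rockafellar construction of a convex potential for a cyclic monotone operator, with the base point played by the witness $v$ from the hypothesis and with the direction of inequalities reversed (sup instead of inf) so that the desired inequality $f(x,y)\geq g(y)-g(x)$ comes out correctly.

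Concretely, I would define
\[
g(x) \;=\; \sup\!\left\{-\sum_{i=0}^{k} f(z_i,z_{i+1}) \,:\, k\geq 0,\ z_0=x,\ z_{k+1}=v,\ z_1,\ldots,z_k\in K\right\}.
\]
The inequality in part (i) then follows by the usual chain-prepending argument: given any admissible chain $y=w_0,w_1,\ldots,w_{j+1}=v$, prepending $x$ produces an admissible chain from $x$ to $v$ whose associated sum is larger by exactly $f(x,y)$, so $g(x)\geq -f(x,y)-\sum_{i=0}^{j}f(w_i,w_{i+1})$ for every chain from $y$; taking the supremum over such chains gives $g(x)+f(x,y)\geq g(y)$, which is the desired inequality.

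The main difficulty, and the step that really uses both type (I) cyclic pseudomonotonicity and the witness $f(u,v)>0$, is showing that $g$ is finite-valued. Finiteness from below is routine: the $k=0$ chain yields $g(x)\geq -f(x,v)>-\infty$. For the upper bound, I would fix any admissible chain $x=z_0,z_1,\ldots,z_k,z_{k+1}=v$ and append $u$ at the end to obtain a sequence of length $k+3$. Applying $(k+3)$-pseudomonotonicity of type (I) to $(x,z_1,\ldots,z_k,v,u)$ yields the implication
\[
f(x,z_1)+f(z_1,z_2)+\cdots+f(z_k,v)\leq f(x,u)\ \Longrightarrow\ f(u,v)\leq 0.
\]
Since the consequent is ruled out by hypothesis, the antecedent must fail for every admissible chain, so every chain sum is strictly greater than $f(x,u)$; thus $g(x)\leq -f(x,u)<+\infty$, and $g$ maps into $\mathbb{R}$.

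For part (ii), I would observe that for each fixed admissible chain the expression $-\sum_{i=0}^{k} f(z_i,z_{i+1})$, viewed as a function of the starting point $x=z_0$, depends on $x$ only through the first term $-f(x,z_1)$ (or $-f(x,v)$ when $k=0$); the remaining terms are constants with respect to $x$. If $f(\cdot,y)$ is concave for every $y\in K$, each such first term is a convex function of $x$, so $g$ is a pointwise supremum of convex functions on $K$ and hence convex. The main obstacle in the whole argument is the upper bound on $g$: the slightly non-obvious trick of appending the witness $u$ to an arbitrary chain and then invoking type (I) pseudomonotonicity is the crucial step, as without this one cannot rule out $g\equiv +\infty$.
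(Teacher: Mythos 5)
Your proposal is correct and follows essentially the same route as the paper: your $g$ is exactly the paper's $-\varphi$ (a supremum of negated chain sums ending at $v$ rather than the negative of the infimum of the chain sums), the chain-prepending step is the paper's inequality $f(x_1,x_2)+\varphi(x_2)\geq\varphi(x_1)$, and the finiteness bound $g(x)\leq -f(x,u)$ obtained by appending $u$ and invoking type (I) pseudomonotonicity with $f(u,v)>0$ is precisely the paper's observation that every chain sum exceeds $f(x_1,u)$. Part (ii) is likewise identical in substance: a supremum of convex functions of the initial point is convex, which is the paper's statement that $\varphi$ is an infimum of concave functions and hence concave.
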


\begin{proof}
	(i) Let $f:K\times K\rightarrow\mathbb{R}$ be n-pseudomonotone of type (I) and $f(u,v)> 0$. Note that the definition of n-pseudomonotone of type (I) implies
	$f(x_1,x_2)+f(x_2,x_3)+...+f(x_{n-2},v)> f(x_1,u)$ for each $x_1,x_2,\cdots, x_{n-2}\in K$. Now we define
	$\varphi(x_1)=\inf_{x_2,...,x_{n-2},n\geq3}\{f(x_1,x_2)+f(x_2,x_3)+...+f(x_{n-2},v)\}\geq f(x_1,u)$. Hence we have $f(x_1,x_2)+\inf_{x_3,...,x_{n-2},n\geq3}\{f(x_2,x_3)+...+f(x_{n-2},v)\}\geq \inf_{x_2,...,x_{n-2},n\geq3}\{f(x_1,x_2)+...+f(x_{n-2},v)\}$, therefore $f(x_1,x_2)\geq \varphi(x_1)-\varphi(x_2)$. Taking $g=-\varphi$, we have $f\left(  x,y\right)  \geq g\left(  y\right)-g\left(  x\right)$.

	(ii) If $\forall y\in K, f(\cdot,y)$ is concave, then for all $\lambda\in(0,1)$ and $z_1, z_2 \in K$, we have: \\
	$\varphi(\lambda z_1+(1-\lambda)z_2)=
	\inf_{x_2,...,x_{n-2},n\geq3}\{f(\lambda z_1+(1-\lambda)z_2,x_2)+...+f(x_{n-2},v)\}\geq \lambda\inf_{x_2,...,x_{n-2},n\geq3}\{f( z_1,x_2)+...+f(x_{n-2},v)\}+
	(1-\lambda)\inf_{x_2,...,x_{n-2},n\geq3}\{f(z_2,x_2)+...+f(x_{n-2},v)\}
	=\lambda \varphi(z_1)+(1-\lambda)\varphi(z_2)$. Therefore $g=-\varphi$ is convex.
\end{proof}

\begin{theorem}
		Let $f$ be a cyclic pseudomonotone bifunction of type (I) which satisfies P2. If $f$ is concave respect to the first argument and there exist $u,v\in K$ such that $f(u,v)> 0$, then $\tilde{f}$ has a solution.
\end{theorem}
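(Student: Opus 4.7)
The plan is to adapt the argument of the cyclic monotone theorem just above, accommodating the reversed direction of the fundamental inequality that Proposition \ref{gx-gy} supplies in the pseudomonotone setting. Taking $\lambda=1$ as agreed, a solution of $EP(\tilde f,K)$ is any $x^{\ast}\in K$ for which
\[
f(x^{\ast},z)+\langle\overrightarrow{\bar{x}x^{\ast}},\overrightarrow{x^{\ast}z}\rangle\geq 0,\quad\forall z\in K,
\]
and the goal is to produce such an $x^{\ast}$ as the resolvent of a suitable convex function at $\bar x$.

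First I would apply Proposition \ref{gx-gy}(i), whose hypotheses (cyclic pseudomonotonicity of type (I) together with the existence of $u,v\in K$ with $f(u,v)>0$) are exactly those assumed, to extract $g:K\to\mathbb{R}$ with $f(x,y)\geq g(y)-g(x)$ for all $x,y\in K$. The concavity of $f$ in the first argument then yields convexity of $g$ via part (ii) of the same proposition. For the next step I would verify that $g$ is proper and lower semicontinuous. Taking $n=3$ in the defining formula $\varphi(x_{1})=\inf\{f(x_{1},x_{2})+\cdots+f(x_{n-2},v)\}$ gives $\varphi(x_{1})\leq f(x_{1},v)<+\infty$, while the proof of Proposition \ref{gx-gy} already shows $\varphi(x_{1})\geq f(x_{1},u)>-\infty$, so $g=-\varphi$ is finite-valued and hence proper. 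Lower semicontinuity of $g$ follows from $P2$: in each summand only $f(\cdot,x_{2})$ depends on $x_{1}$ and is upper semicontinuous, and a pointwise infimum of upper semicontinuous functions is upper semicontinuous, so $\varphi$ is usc and $g=-\varphi$ is lsc.

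With $g$ proper, convex, and lower semicontinuous, Theorem 4.2 of \cite{aka} produces a unique $x^{\ast}\in K$ satisfying
\[
g(x^{\ast})-g(z)\leq\langle\overrightarrow{\bar{x}x^{\ast}},\overrightarrow{x^{\ast}z}\rangle,\quad\forall z\in K.
\]
Combining this with the inequality $f(x^{\ast},z)\geq g(z)-g(x^{\ast})$ from Proposition \ref{gx-gy}(i) yields
\[
f(x^{\ast},z)\geq -\langle\overrightarrow{\bar{x}x^{\ast}},\overrightarrow{x^{\ast}z}\rangle,
\]
hence $\tilde f(x^{\ast},z)\geq 0$ for every $z\in K$, which is exactly the desired conclusion. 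The step I expect to be the main obstacle is the verification of the lower semicontinuity and properness of $g$: Proposition \ref{gx-gy} supplies only a pointwise descent inequality, whereas Theorem 4.2 of \cite{aka} needs the full convex-analytic hypotheses, so bridging this gap through $P2$ and the usc-of-infimum principle is the piece that must be executed with care.
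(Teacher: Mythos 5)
Your proof follows the same route as the paper's: apply Proposition \ref{gx-gy} to obtain $g$ with $f(x,y)\geq g(y)-g(x)$, invoke Theorem 4.2 of \cite{aka} at $\bar{x}$, and combine the two inequalities to conclude $\tilde f(x^{\ast},z)\geq 0$ for all $z\in K$. The only difference is that you additionally verify properness and lower semicontinuity of $g$ (finiteness from the bounds $f(x_1,u)\leq\varphi(x_1)\leq f(x_1,v)$, and semicontinuity from $P2$ together with the fact that a pointwise infimum of upper semicontinuous functions is upper semicontinuous) — a hypothesis of Theorem 4.2 that the paper asserts without justification, so this is a welcome refinement rather than a divergence.
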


\begin{proof}
	By Proposition \ref{gx-gy}, $f(x,y)\geq g(y)-g(x)$, where $g$ is convex and lower semicontinuous. Therefore by Theorem 4.2 of \cite{aka}, for each $\bar{x}\in K$ there exists $x\in K$ such that  $$g(y)-g(x)\geq\langle \overrightarrow{x\bar{x}},\overrightarrow{xy}\rangle ,\ \ \forall y\in K.$$ This implies that $\tilde{f}$ has a solution.
\end{proof}

\begin{problem}
	 We have already proved existence of solution for $EP(\tilde{f},K)$ or equivalently existence of a sequence which satisfies \eqref{ppa} by imposing some conditions on the monotone or pseudomonotone  bifunction $f$ and the Hadamard space $X$, but we don't know whether the problem \eqref{ftilde} has a solution without these extra conditions.
\end{problem}

\section{ \bf Convergence of Resolvent}
\noindent
Now consider a monotone bifunction $f:K\times K\rightarrow\mathbb{R}$. Assume that for each $\lambda> 0$ and $\bar{x}\in K$, the equilibrium problem for $\tilde{f}$ (see \eqref{ftilde}) has a solution that is unique. This unique solution is denoted by $J^f_{\lambda}\bar{x}$ and it is called the resolvent of $f$ of order $\lambda> 0$ at $\bar{x}$. The resolvent $J^f_{\lambda}$ or briefly $J_{\lambda}$ for monotone bifunctions in Hilbert and Banach spaces has been introduced by Ait Mansour et al. in \cite{ait}(see also \cite{had-kha}). In Hadamard spaces, we proved existence of the resolvent in some special cases in the previous section. In the following theorem we prove $J_{\lambda}$ is firmly nonexpansive and then prove that for each $x\in X$, $J_{\lambda}x$ converges strongly to an equilibrium point of $f$ as $\lambda\rightarrow0$, if $S(f,K)\neq\varnothing$. First we recall the definitions of firmly nonexpansive and quasi firmly nonexpansive mappings.

\begin{definition}
	A mapping $T:X\rightarrow X$ is called firmly nonexpansive iff $$\langle \overrightarrow{xy},\overrightarrow{TxTy}\rangle \geq d^2(Tx,Ty),\ \ \forall x,y\in X$$
	$T$ is called quasi firmly nonexpansive if ${\rm Fix}(T)\neq\varnothing$,
where ${\rm Fix}(T)$ is the set of all fixed point of $T$, and $$\langle \overrightarrow{xp},\overrightarrow{Txp}\rangle \geq d^2(Tx,p),\ \ \forall x\in X$$ for each $p\in {\rm Fix}(T)$.
\end{definition}

\begin{proposition}\label{firmly nonexpansive}
	Let $f:K\times K\rightarrow\mathbb{R}$ be a bifunction and $\lambda> 0$ such that $J_{\lambda}x$ exists.\\
	i) If $f$ is monotone, then the mapping $x\mapsto J_{\lambda}x$ is firmly nonexpansive \\
	ii) If $f$ is pseudomonotone and $S(f,K)\neq\varnothing$, then $J_{\lambda}x$ is quasi-firmly nonexpansive.
\end{proposition}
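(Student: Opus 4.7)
Write $u:=J_\lambda x$ and $v:=J_\lambda y$; by the definition of the resolvent these are the unique points of $K$ satisfying
$$f(u,z)+\lambda\langle \overrightarrow{xu},\overrightarrow{uz}\rangle \geq 0 \quad\text{and}\quad f(v,z)+\lambda\langle \overrightarrow{yv},\overrightarrow{vz}\rangle \geq 0 \quad \forall z\in K.$$
My strategy for both parts is the same: substitute well-chosen test points $z$, use (pseudo-)monotonicity to kill the bifunction terms, and rearrange what remains using only the antisymmetry $\langle \overrightarrow{ab},\overrightarrow{cd}\rangle =-\langle \overrightarrow{ba},\overrightarrow{cd}\rangle $ and the additivity $\langle \overrightarrow{ax},\overrightarrow{cd}\rangle +\langle \overrightarrow{xb},\overrightarrow{cd}\rangle =\langle \overrightarrow{ab},\overrightarrow{cd}\rangle $ of quasi-linearization recalled in the introduction.

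\medskip
\noindent\textbf{Part (i).} I would test the first inequality at $z=v$ and the second at $z=u$ and add them. Monotonicity $f(u,v)+f(v,u)\leq 0$ then gives
$$\langle \overrightarrow{xu},\overrightarrow{uv}\rangle +\langle \overrightarrow{yv},\overrightarrow{vu}\rangle \geq 0,$$
and antisymmetry rewrites the second term so that $\langle \overrightarrow{xu},\overrightarrow{uv}\rangle +\langle \overrightarrow{vy},\overrightarrow{uv}\rangle \geq 0$. Next I would split $\overrightarrow{xy}$ through the intermediate points $u$ and $v$ via additivity,
$$\langle \overrightarrow{xy},\overrightarrow{uv}\rangle =\langle \overrightarrow{xu},\overrightarrow{uv}\rangle +\langle \overrightarrow{uv},\overrightarrow{uv}\rangle +\langle \overrightarrow{vy},\overrightarrow{uv}\rangle =\bigl(\langle \overrightarrow{xu},\overrightarrow{uv}\rangle +\langle \overrightarrow{vy},\overrightarrow{uv}\rangle \bigr)+d^2(u,v),$$
and the preceding bound forces $\langle \overrightarrow{xy},\overrightarrow{uv}\rangle \geq d^2(u,v)$, which is the firmly nonexpansive inequality for $J_\lambda$.

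\medskip
\noindent\textbf{Part (ii).} I would first observe that $S(f,K)\subseteq {\rm Fix}(J_\lambda)$: for $p\in S(f,K)$ the point $u=p$ trivially satisfies the defining inequality for $J_\lambda p$, since $\langle \overrightarrow{pp},\overrightarrow{pz}\rangle =0$ by definition of quasi-linearization and $f(p,z)\geq 0$, so uniqueness of the equilibrium of $\tilde f$ forces $J_\lambda p=p$. Now fix $p\in S(f,K)$ and set $u:=J_\lambda x$. Since $f(p,u)\geq 0$, pseudomonotonicity yields $f(u,p)\leq 0$; testing the defining inequality for $u$ at $z=p$ then produces $\lambda\langle \overrightarrow{xu},\overrightarrow{up}\rangle \geq -f(u,p)\geq 0$. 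A single application of additivity closes the argument:
$$\langle \overrightarrow{xp},\overrightarrow{up}\rangle =\langle \overrightarrow{xu},\overrightarrow{up}\rangle +\langle \overrightarrow{up},\overrightarrow{up}\rangle \geq d^2(u,p),$$
which is the quasi firmly nonexpansive inequality.

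\medskip
\noindent\textbf{Main obstacle.} There is no genuine analytic difficulty; both halves reduce to careful bookkeeping with the quasi-linearization identities once the correct test points are plugged in. The only step that requires any thought is the inclusion $S(f,K)\subseteq {\rm Fix}(J_\lambda)$ in part (ii), which is needed in order to activate pseudomonotonicity of $f$ with a genuine equilibrium point sitting in the second slot.
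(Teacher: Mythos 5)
Your proposal is correct and follows essentially the same route as the paper: in (i) you test each resolvent inequality at the other resolvent point, sum, invoke monotonicity, and expand $\langle \overrightarrow{xy},\overrightarrow{uv}\rangle$ by additivity of the quasi-linearization (the ``straightforward computation'' the paper leaves implicit); in (ii) you test at $p$, use pseudomonotonicity to get $f(J_\lambda x,p)\leq 0$, and apply additivity once. Your extra verification that $S(f,K)\subseteq {\rm Fix}(J_\lambda)$ is a welcome touch that the paper only records in the remark following the proposition.
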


\begin{proof}
	(i) First suppose that $f$ is monotone. Take two points $x,z\in X$. We have
\begin{equation}f(J_{\lambda}x,y)+\lambda\langle \overrightarrow{xJ_{\lambda}x},\overrightarrow{J_{\lambda}xy}\rangle \geq0,\ \ \ \forall y\in K\label{resolvent ppa x}\end{equation}
and also
\begin{equation}f(J_{\lambda}z,y)+\lambda\langle \overrightarrow{zJ_{\lambda}z},\overrightarrow{J_{\lambda}zy}\rangle \geq0,\ \ \ \forall y\in K\label{resolvent ppa z}\end{equation}
Now letting $y=J_{\lambda}z$ in \eqref{resolvent ppa x} and $y=J_{\lambda}x$ in \eqref{resolvent ppa z}.
Then summing the recent inequalities, by the monotonicity of $f$, we get
$$\langle \overrightarrow{xJ_{\lambda}x},\overrightarrow{J_{\lambda}xJ_{\lambda}z}\rangle +\langle \overrightarrow{zJ_{\lambda}z},\overrightarrow{J_{\lambda}zJ_{\lambda}x}\rangle \geq0$$By a straightforward computation and using quasi-inner product properties, we get
$$\langle \overrightarrow{xz},\overrightarrow{J_{\lambda}xJ_{\lambda}z}\rangle \geq d^2(J_{\lambda}x,J_{\lambda}z),$$
which follows the desired result. Also the recent inequality implies nonexpansiveness of $J_{\lambda}$ by Cauchy-Schwarz inequality.\\
(ii) If $f$ is pseudomonotone, then set $y=p\in S(f,K)$ in \eqref{resolvent ppa x}, since $f(J_{\lambda}x,p)\leq0$, we get
$$\langle \overrightarrow{xJ_{\lambda}x},\overrightarrow{J_{\lambda}xp}\rangle \geq0,$$
which implies that
$$\langle \overrightarrow{xp},\overrightarrow{J_{\lambda}xp}\rangle \geq d^2(J_{\lambda}x,p).$$

\end{proof}

It is easy to see that in two cases of Proposition \ref{firmly nonexpansive}, $S(f,K)={\rm Fix}(J_{\lambda})$.

Before the main result of this section we need to prove  Kadec-Klee property in Hadamard spaces.
\begin{proposition}\label{kadek kelly}
	Suppose that $x_n$ is $\Delta$-convergent to $x$ and there exists $y\in X$ such that $\limsup d(x_n,y)\leq d(x,y)$, then $x_n$converges strongly to $x$.
\end{proposition}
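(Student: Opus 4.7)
The plan is to establish that $\alpha:=\limsup_{n\to\infty} d(x_n,x)=0$. I will extract a subsequence $\{x_{n_k}\}$ along which $d(x_{n_k},x)\to\alpha$; since $\Delta$-convergence of the full sequence passes to every subsequence by definition, $x$ remains the unique asymptotic center of $\{x_{n_k}\}$. The key analytic tool will be the CN-type inequality of Lemma~\ref{metric}(iv), applied not to the midpoint of $x$ and $y$ but to the full family of interpolants $m_t:=(1-t)x\oplus ty$ for $t\in(0,1]$, because the exponent in $t$ produced by the $-t(1-t)d^2(x,y)$ correction will allow me to let $t\to 0^{+}$ at the end.

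First I would record two ingredients. From the hypothesis $\limsup d(x_n,y)\leq d(x,y)$, passing to the subsequence gives $\limsup_k d^2(x_{n_k},y)\leq d^2(x,y)$. From the asymptotic center characterization of $\Delta$-convergence, $x$ minimizes $z\mapsto\limsup_k d(x_{n_k},z)$, so in particular
$$\alpha \;=\; \limsup_k d(x_{n_k},x) \;\leq\; \limsup_k d(x_{n_k},m_t)\qquad \text{for every } t\in(0,1].$$

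Then I would apply Lemma~\ref{metric}(iv) at the triple $(x,y,x_{n_k})$ with parameter $t$, obtaining
$$d^2(m_t,x_{n_k})\;\leq\;(1-t)\,d^2(x,x_{n_k})+t\,d^2(y,x_{n_k})-t(1-t)\,d^2(x,y).$$
Taking $\limsup_k$ on both sides (using $\limsup(a_k+b_k)\leq\limsup a_k+\limsup b_k$ for bounded sequences, and $(\limsup d)^2=\limsup d^2$ for nonnegative quantities) yields
$$\alpha^2\;\leq\;\limsup_k d^2(m_t,x_{n_k})\;\leq\;(1-t)\alpha^2+t\,d^2(x,y)-t(1-t)\,d^2(x,y)\;=\;(1-t)\alpha^2+t^2 d^2(x,y).$$
Subtracting $(1-t)\alpha^2$ and dividing by $t>0$ gives $\alpha^2\leq t\,d^2(x,y)$.

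Finally, letting $t\to 0^{+}$ forces $\alpha=0$, i.e.\ $\lim d(x_n,x)=0$, which is the desired strong convergence. The main subtlety I anticipate is the choice of interpolant: using only the midpoint $m_{1/2}$ gives the weaker estimate $\alpha^2\leq\tfrac{1}{2}d^2(x,y)$, which is insufficient; the correct idea is to exploit the quadratic dependence on $t$ in the CN correction term so that, after cancelling one factor of $t$, the residual still tends to zero with $t$. The rest of the argument is routine manipulation of $\limsup$ together with the asymptotic center property that comes built into the definition of $\Delta$-convergence on Hadamard spaces.
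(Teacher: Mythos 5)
Your proof is correct, but it follows a genuinely different route from the paper's. The paper expands $d^2(x_n,x)$ by quasi-linearization into $d^2(x_n,y)+2\langle\overrightarrow{yx},\overrightarrow{x_nx}\rangle-d^2(x,y)$ and then kills the cross term by invoking Theorem 2.6 of \cite{ak}, which relates $\Delta$-convergence to the sign of $\limsup$ of such quasi-inner products; the whole argument is three lines but leans on that external characterization. You instead run an Opial-type argument: you use only the defining minimality of the asymptotic center (applied to the competitors $m_t=(1-t)x\oplus ty$) together with the CN inequality of Lemma \ref{metric}(iv), and the quadratic correction term $-t(1-t)d^2(x,y)$ lets you cancel one factor of $t$ and send $t\to0^{+}$ to force $\limsup d(x_n,x)=0$. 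Each step checks out: the inequality $\alpha\le\limsup_k d(x_{n_k},m_t)$ is exactly the asymptotic-center property, the identity $(\limsup d)^2=\limsup d^2$ is valid for nonnegative sequences, and the hypothesis $\limsup d(x_n,y)\le d(x,y)$ supplies both the boundedness needed for asymptotic centers and the estimate $\limsup d^2(x_n,y)\le d^2(x,y)$. Two minor remarks: the passage to a subsequence realizing $\alpha$ is harmless but unnecessary, since $\limsup_n d^2(x_n,x)=\alpha^2$ already holds for the full sequence; and your approach has the advantage of being self-contained (no appeal to \cite{ak}), at the cost of being slightly longer than the paper's quasi-linearization computation.
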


\begin{proof}
	By the definition and properties of quasi-linearization, we have\\
	$d^2(x_n,x)=\langle \overrightarrow{x_nx},\overrightarrow{x_nx}\rangle =\langle \overrightarrow{x_nx},\overrightarrow{x_ny}\rangle +\langle \overrightarrow{x_nx},\overrightarrow{yx}\rangle $\\$=\langle \overrightarrow{x_ny},\overrightarrow{x_ny}\rangle +\langle \overrightarrow{yx},\overrightarrow{x_ny}\rangle +\langle \overrightarrow{x_nx},\overrightarrow{yx}\rangle $\\
	$=d^2(x_n,y)+2\langle \overrightarrow{yx},\overrightarrow{x_nx}\rangle -d^2(x,y)$\\
	Taking limsup when $n\rightarrow+\infty$, by Theorem 2.6 of \cite{ak} and the hypotheses, we get
	$$\limsup d^2(x_n,x)\leq\limsup d^2(x_n,y)-d^2(x,y)\leq0$$
	as desired.
\end{proof}

\begin{theorem} \label{convergence resolvent}
	Let $f:K\times K\rightarrow\mathbb{R}$ be a monotone bifunction that satisfy $P1, P3$ and $\Delta$-upper semicontinuity respect to the first argument and $S(f,K)\neq\varnothing$. If for each $\lambda> 0$ and $x\in K$, $J_{\lambda}x$ exists, then $J_{\lambda}x$ converges strongly to $p\in S(f,K)$, which is the nearest point of $S(f,K)$ to $x$ as $\lambda\rightarrow0$.
\end{theorem}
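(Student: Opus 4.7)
The plan is to combine three ingredients: the quasi firmly nonexpansive inequality of Proposition \ref{firmly nonexpansive}(ii), the $\Delta$-compactness of bounded sets in a Hadamard space, and the Kadec--Klee property of Proposition \ref{kadek kelly}. First I would observe that $S(f,K)$ is nonempty, closed and convex: it coincides with $\mathrm{Fix}(J_\lambda)$ (cf.\ the remark after Proposition \ref{firmly nonexpansive}), and since monotonicity together with $P1$ yields $P4^*$, Corollary \ref{lem12} also identifies $S(f,K)$ with the intersection $\bigcap_{y\in K}\{z\in K:\ f(y,z)\le 0\}$, which is closed and convex by $P3$. Hence the metric projection $p:=P_{S(f,K)}(x)$ is well defined.

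The key inequality is obtained by expanding the quasi firmly nonexpansive estimate. For any $q\in S(f,K)$, Proposition \ref{firmly nonexpansive}(ii) gives $\langle\overrightarrow{xq},\overrightarrow{J_\lambda x\,q}\rangle\ge d^2(J_\lambda x,q)$; writing out the quasi-linearization and rearranging produces
\begin{equation*}
d^2(x,J_\lambda x)+d^2(J_\lambda x,q)\ \le\ d^2(x,q),\qquad \forall q\in S(f,K).
\end{equation*}
In particular, taking $q=p$ yields $d(x,J_\lambda x)\le d(x,p)$, so the family $\{J_\lambda x\}_{\lambda>0}$ is bounded uniformly as $\lambda\to 0$.

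Next, take an arbitrary sequence $\lambda_n\to 0^+$. By Lemma \ref{compact}, the bounded sequence $\{J_{\lambda_n}x\}$ has a $\Delta$-convergent subsequence $J_{\lambda_{n_k}}x\overset{\triangle}{\to}\bar p$. To identify $\bar p\in S(f,K)$, I would return to the defining inequality
\[
f(J_{\lambda_{n_k}}x,y)+\lambda_{n_k}\langle\overrightarrow{xJ_{\lambda_{n_k}}x},\overrightarrow{J_{\lambda_{n_k}}x\,y}\rangle\ \ge\ 0,\qquad \forall y\in K.
\]
By Cauchy--Schwarz and the boundedness of $\{J_{\lambda_n}x\}$, the second term tends to $0$, so $\liminf_k f(J_{\lambda_{n_k}}x,y)\ge 0$; the assumed $\Delta$-upper semicontinuity of $f$ in the first argument then forces $f(\bar p,y)\ge\limsup_k f(J_{\lambda_{n_k}}x,y)\ge 0$ for every $y\in K$, i.e.\ $\bar p\in S(f,K)$.

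Finally, I would apply the key inequality with $q=p$ along the subsequence to conclude $\limsup_k d(x,J_{\lambda_{n_k}}x)\le d(x,p)$. Since $d(x,\cdot)$ is continuous and convex, it is $\Delta$-lower semicontinuous, so $d(x,\bar p)\le\liminf_k d(x,J_{\lambda_{n_k}}x)\le d(x,p)$; together with $\bar p\in S(f,K)$ and uniqueness of the nearest point, this forces $\bar p=p$. Then $\limsup_k d(J_{\lambda_{n_k}}x,x)\le d(x,p)=d(\bar p,x)$, and Proposition \ref{kadek kelly} (Kadec--Klee) with $y=x$ upgrades $\Delta$-convergence to strong convergence $J_{\lambda_{n_k}}x\to p$. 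Since every subsequence of $\{J_{\lambda_n}x\}$ has a further subsequence converging strongly to the same limit $p$, the whole net $J_\lambda x$ converges strongly to $p$ as $\lambda\to 0$.

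The step I expect to be most delicate is the verification $\bar p\in S(f,K)$: the only available tool for passing to the limit under $\Delta$-convergence is the hypothesised $\Delta$-upper semicontinuity of $f$ in its first variable, and one must make sure that the perturbation term $\lambda_n\langle\overrightarrow{xJ_{\lambda_n}x},\overrightarrow{J_{\lambda_n}x\,y}\rangle$ is genuinely negligible, which is where the boundedness produced in the first step is essential.
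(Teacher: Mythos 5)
Your proposal is correct and follows essentially the same route as the paper's proof: boundedness of $\{J_\lambda x\}$ via Proposition \ref{firmly nonexpansive}, $\Delta$-compactness of bounded sets, identification of any $\Delta$-cluster point as an equilibrium point using $\Delta$-upper semicontinuity (with the perturbation term killed by Cauchy--Schwarz and $\lambda_n\to 0$), identification of that cluster point as $P_{S(f,K)}x$ via the estimate $d(x,J_\lambda x)\le d(x,q)$ together with $\Delta$-lower semicontinuity of $d(x,\cdot)$, and finally Proposition \ref{kadek kelly} to upgrade to strong convergence. The only cosmetic difference is that you package the two estimates $d(J_\lambda x,q)\le d(x,q)$ and $d(x,J_\lambda x)\le d(x,q)$ into the single inequality $d^2(x,J_\lambda x)+d^2(J_\lambda x,q)\le d^2(x,q)$, which the paper obtains in two separate steps from the defining inequality of the resolvent and monotonicity.
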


\begin{proof}
	Take $p\in S(f,K)$. By Proposition \ref{firmly nonexpansive}, $d(J_{\lambda}x,p)\leq d(x,p)$. Therefore $\{J_{\lambda}x\}$ is bounded.
	 Suppose that there is a sequence $\lambda_n$ converges to $0$ such that $J_{\lambda_n}x$ $\Delta$-converges to $q$. By $\Delta$-upper semicontinuity of $f$ and \eqref{resolvent ppa x}, we get $f(q,y)\geq0$ for each $y\in K$, hence $q\in S(f,K)$. Note that by monotonicity of $f$, $f(J_{\lambda}x,p)\leq0$, for all $p\in S(f,K)$. Therefore (4.1) implies that $\langle \overrightarrow{xJ_{\lambda}x},\overrightarrow{J_{\lambda}xp}\rangle \geq0$. Hence $$d^2(J_{\lambda}x,x)\leq\langle \overrightarrow{xJ_{\lambda}x},\overrightarrow{xp}\rangle ,\ \ \forall p\in S(f,K)$$
	By Cauchy-Schwarz inequality
	\begin{equation}d(x,J_{\lambda}x)\leq d(x,p),\ \ \ \ \forall p\in S(f,K)\label{resolvent p}\end{equation}
	 Now taking $\lambda=\lambda_n$ and taking liminf when $n\rightarrow+\infty$, since $d(x,\cdot)$ is convex and continuous, then it is $\Delta$-lower semicontinuous and we get
	$$d(x,q)\leq d(x,p),\ \ \ \ \forall p\in S(f,K)$$ Therefore $q=P_{S(f,K)}x$. This proves the $\Delta$-convergence of $J_{\lambda}x$ to $P_{S(f,K)}x$ as $\lambda\rightarrow0$. By \eqref{resolvent p}, we have
	$$d(x,J_{\lambda}x)\leq d(x,P_{S(f,K)}x)$$
	Now by Proposition \ref{kadek kelly}, $J_{\lambda}x$ converges strongly to $P_{S(f,K)}x$ as $\lambda\rightarrow0$.
\end{proof}
	
	 \begin{remark}
	 	Theorem \ref{convergence resolvent} is true also for pseudomonotone bifunctions if $J_{\lambda}x$ exists. But since by Proposition \ref{convex}, condition $\lambda> \theta\geq 0$ is essential for existence of solution to $\tilde{f}$ and therefore existence of $J_{\lambda}x$, we explored Theorem \ref{convergence resolvent} only for monotone bifunctions.
	 \end{remark}

\section{\bf Proximal Point Algorithm}
\noindent
In this section, we study the convergence of proximal point method
for equilibrium problems that the bifunction $f$ satisfies $P1$,
$P2$, $P3$, $P4^*$ and $P4^{\bullet}$ by assuming existence
of a sequence that satisfies \eqref{ppa}. For computational and
numerical purposes and since the existence of the sequence satisfying \eqref{ppa} is not guaranteed in general, we consider an inexact version of \eqref{ppa}.
Let $\theta$ be the under monotonicity constant of $f$. Take a
sequence of regularization parameters $\{\lambda_k\}\subset
(\theta, \bar{\lambda}]$,
for some $\bar{\lambda} > \theta$. Take $x_0 \in X$ and construct the sequence $\{x_k\}\subset K$ as follows:\\
Given $x_k$, then we take $y_k$ such that $d(x_k,y_k)\leq e_k$, in
turn $x_{k+1}$ is a unique solution of problem
$EP(f_k,K)$, where $f_k:K\times K\rightarrow\mathbb{R}$ is
defined as
\begin{equation}
f_k(x,y)= f(x,y)+\lambda_k \langle  \overrightarrow{y_kx
},\overrightarrow{xy }\rangle.
\label{regularization}\end{equation} where
$\sum_{k=1}^{\infty}e_k<+\infty$. Throughout this section, we assume that $\sum_{k=1}^{\infty}e_k< +\infty$.

\begin{lemma}\label{bounded}
Consider $EP(f,K)$, where $f$ satisfies $P1$, $P2$, $P3$, $P4^*$ and $P4^{\bullet}$. If $EP(f,K)$ has a solution, then the sequence $\{x_k\}$, which is generated by (\ref{regularization}),  is bounded and $\lim d(x_k,x_{k+1})=0$.\\
\end{lemma}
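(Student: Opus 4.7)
The plan is to exploit the pseudo-monotonicity condition $P4^*$ to establish an approximate Fej\'er monotonicity of $\{x_k\}$ with respect to any solution $p\in S(f,K)$, and then use the summability of the inexactness errors $\{e_k\}$ to conclude both boundedness and the asymptotic regularity $d(x_k,x_{k+1})\to 0$.

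First I will fix $p\in S(f,K)$ and observe that by $P4^*$ we have $f(x_{k+1},p)\le 0$. Substituting $y=p$ in the defining inequality for $x_{k+1}$,
\[
0 \le f(x_{k+1},p)+\lambda_k\langle \overrightarrow{y_kx_{k+1}},\overrightarrow{x_{k+1}p}\rangle,
\]
and using $f(x_{k+1},p)\le 0$ together with $\lambda_k>0$, I will deduce
\[
\langle \overrightarrow{y_kx_{k+1}},\overrightarrow{x_{k+1}p}\rangle \ge 0.
\]
Expanding this via the quasi-linearization identity,
\[
\langle \overrightarrow{y_kx_{k+1}},\overrightarrow{x_{k+1}p}\rangle
= \tfrac{1}{2}\bigl[d^2(y_k,p)-d^2(y_k,x_{k+1})-d^2(x_{k+1},p)\bigr],
\]
yields the key inequality
\[
d^2(x_{k+1},p)+d^2(y_k,x_{k+1}) \le d^2(y_k,p).
\]

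From this inequality boundedness follows quickly: dropping $d^2(y_k,x_{k+1})\ge 0$ gives $d(x_{k+1},p)\le d(y_k,p)\le d(x_k,p)+e_k$, and since $\sum e_k<+\infty$, a straightforward induction shows $d(x_k,p)\le d(x_0,p)+\sum_{j\ge 0}e_j<+\infty$, so $\{x_k\}$ is bounded. Let $M=\sup_k d(x_k,p)$. Using $d^2(y_k,p)\le (d(x_k,p)+e_k)^2\le d^2(x_k,p)+2Me_k+e_k^2$, the key inequality rearranges to
\[
d^2(y_k,x_{k+1}) \le d^2(x_k,p)-d^2(x_{k+1},p)+2Me_k+e_k^2.
\]
Summing telescopically over $k=0,1,\dots,N$, and using $\sum e_k<+\infty$ (which also gives $\sum e_k^2<+\infty$), I obtain $\sum_{k=0}^{\infty}d^2(y_k,x_{k+1})<+\infty$, hence $d(y_k,x_{k+1})\to 0$. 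Combining with the triangle inequality $d(x_k,x_{k+1})\le d(x_k,y_k)+d(y_k,x_{k+1})\le e_k+d(y_k,x_{k+1})$ and $e_k\to 0$ completes the proof that $d(x_k,x_{k+1})\to 0$.

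The main subtlety, and really the only one, is the bookkeeping of the inexactness: the proximal step is anchored at $y_k$ rather than $x_k$, so the standard Fej\'er inequality appears with $y_k$ on the right, and passing from $d(y_k,p)$ to $d(x_k,p)$ introduces the extra terms $2Me_k+e_k^2$ that must be absorbed via summability. The undermonotonicity parameter $\theta$ and the choice $\lambda_k\in(\theta,\bar\lambda]$ play no explicit role in this lemma beyond ensuring that the sequence $\{x_k\}$ is well defined and unique; pseudo-monotonicity alone drives the argument.
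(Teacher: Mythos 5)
Your proposal is correct and follows essentially the same route as the paper: both derive the key inequality $d^2(x_{k+1},p)+d^2(y_k,x_{k+1})\le d^2(y_k,p)$ from $P4^*$ and quasi-linearization, deduce the quasi-Fej\'er estimate $d(x_{k+1},p)\le d(x_k,p)+e_k$, and then exploit the telescoping structure together with $\sum e_k<+\infty$ to get $d(x_k,x_{k+1})\to 0$. The only cosmetic difference is that you sum to obtain $\sum_k d^2(y_k,x_{k+1})<+\infty$ and finish with the triangle inequality, whereas the paper bounds $d^2(x_{k+1},x_k)$ directly and invokes the existence of $\lim d(x_k,x^*)$; both are valid instances of the same argument.
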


\begin{proof}
Take $x^* \in S(f,K)$. Note that $f(x_{k+1},x^*)+\lambda_k \langle  \overrightarrow{y_kx_{k+1} },\overrightarrow{x_{k+1}x^* }\rangle \geq0$. Since $f(x_{k+1},x^*)\leq 0$, hence we have $ \langle  \overrightarrow{y_kx_{k+1} },\overrightarrow{x_{k+1}x^* }\rangle \geq0$, which implies that
$$d^2(x_{k+1},x^*)+ d^2(y_k,x_{k+1})\leq d^2(y_k ,x^*).$$ Therefore, we conclude\\
$d^2(x_{k+1},x^*)+ d^2(y_k,x^*)+d^2(x_{k+1},x^*)-2 \langle  \overrightarrow{y_kx^*},\overrightarrow{x_{k+1}x^*}\rangle \leq d^2(y_k ,x^*)$\\
$\Longrightarrow 2d^2(x_{k+1},x^*)\leq 2 \langle  \overrightarrow{y_kx^*},\overrightarrow{x_{k+1}x^*}\rangle \leq 2d(y_k ,x^*)d(x_{k+1},x^*)$\\
$\Longrightarrow d(x_{k+1},x^*)\leq d(y_k ,x^*)\leq d(x_k,x^*)+d(x_k,y_k)$\\
Hence, we have
\begin{equation}
d(x_{k+1},x^*)\leq d(x_k,x^*)+e_k.
\label{lim-exist}\end{equation}
Therefore $\lim d(x_k,x^*)$ exists.\\
Also from $d^2(x_{k+1},x^*)+ d^2(y_k,x_{k+1})\leq d^2(y_k ,x^*)$, we have: \\
$d^2(x_{k+1},x^*)+d^2(x_k,y_k)+d^2(x_{k+1},x_k)-2\langle  \overrightarrow{y_kx_k},\overrightarrow{x_{k+1}x_k}\rangle  \\
\leq d^2(x_k,y_k)+d^2(x_k ,x^*)+2\langle  \overrightarrow{y_kx_k},\overrightarrow{x_kx^*}\rangle .$\\
Thus, we have:\\
$d^2(x_{k+1},x_k)\leq d^2(x_k ,x^*)-d^2(x_{k+1},x^*)+2\langle  \overrightarrow{y_kx_k},\overrightarrow{x_{k+1}x^*}\rangle .$\\
By Cauchy-Schwarz inequality, we get:\\
$d^2(x_{k+1},x_k)\leq d^2(x_k ,x^*)-d^2(x_{k+1},x^*)+2e_kd(x_{k+1},x^*)$.\\
Since $\lim d(x_k,x^*)$ exists and $\sum_{k=1}^{\infty}e_k<+\infty$,
therefore $\lim d(x_{k+1},x_k)=0$.\\
\end{proof}

\begin{theorem}\label{delta convergence ppa}
Consider $EP(f,K)$, where $f$ satisfies $P1$, $P3$, $P4^*$ and  $P4^{\bullet}$.
If $f(\cdot,y)$ is $\triangle$-upper semicontinuous for all $y\in K$ and $EP(f,K)$ has a solution, then the sequence $\{x_k\}$ generated by (\ref{regularization}), is $\triangle$-convergent to some solution of $EP(f,K)$.\\
\end{theorem}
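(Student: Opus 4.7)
The plan is to combine Lemma \ref{bounded} with $\triangle$-compactness, invoke $\triangle$-upper semicontinuity of $f(\cdot,y)$ to identify every $\triangle$-cluster point as an equilibrium point, and then upgrade subsequential convergence to full $\triangle$-convergence via a standard Opial-type uniqueness argument.

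First I would apply Lemma \ref{bounded} (its $P2$ hypothesis follows since $\triangle$-upper semicontinuity is strictly stronger than metric upper semicontinuity): the sequence $\{x_k\}$ is bounded, $\lim_{k\to\infty} d(x_k,x^{\ast})$ exists for every $x^{\ast}\in S(f,K)$, and $d(x_{k+1},x_k)\to 0$. Combined with the built-in bound $d(x_k,y_k)\le e_k\to 0$, the triangle inequality yields $d(y_k,x_{k+1})\to 0$. Boundedness together with Lemma \ref{compact} produces a subsequence $x_{k_j}\overset{\triangle}{\rightarrow}\bar{x}$; since $K$ is closed and convex, hence $\triangle$-closed, $\bar{x}\in K$.

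Next I would show $\bar{x}\in S(f,K)$. From the defining inequality \eqref{regularization} together with the Cauchy-Schwarz inequality, for any $y\in K$,
\[
f(x_{k+1},y)\geq -\lambda_k\langle \overrightarrow{y_kx_{k+1}},\overrightarrow{x_{k+1}y}\rangle \geq -\bar{\lambda}\,d(y_k,x_{k+1})\,d(x_{k+1},y).
\]
Since $\{x_k\}$ is bounded and $d(y_k,x_{k+1})\to 0$, the right-hand side tends to $0$, hence $\liminf_{k\to\infty} f(x_{k+1},y)\ge 0$. Because $d(x_{k_j+1},x_{k_j})\to 0$, a short asymptotic-centers calculation (two sequences at vanishing distance share the same asymptotic radius function, and hence the same singleton asymptotic center on every subsequence) shows $x_{k_j+1}\overset{\triangle}{\rightarrow}\bar{x}$; then $\triangle$-upper semicontinuity of $f(\cdot,y)$ gives
\[
0\leq \liminf_{j\to\infty} f(x_{k_j+1},y)\leq \limsup_{j\to\infty} f(x_{k_j+1},y)\leq f(\bar{x},y),
\]
and since $y\in K$ was arbitrary, $\bar{x}\in S(f,K)$.

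Finally, to upgrade to full $\triangle$-convergence I would run the Opial-type uniqueness argument. Suppose $x_{k_j}\overset{\triangle}{\rightarrow}u$ and $x_{m_j}\overset{\triangle}{\rightarrow}v$ with $u,v\in S(f,K)$. By Lemma \ref{bounded}, $\ell_u:=\lim_k d(x_k,u)$ and $\ell_v:=\lim_k d(x_k,v)$ both exist. If $u\neq v$, the singleton property of asymptotic centers in Hadamard spaces gives $r(u,\{x_{k_j}\})<r(v,\{x_{k_j}\})$, whence $\ell_u<\ell_v$; the symmetric inequality applied to $\{x_{m_j}\}$ yields $\ell_v<\ell_u$, a contradiction. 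Hence $u=v$ and $x_k\overset{\triangle}{\rightarrow}u\in S(f,K)$. The main obstacle I anticipate is the cluster-point identification: controlling the quasi-linearization perturbation term while swapping $\limsup$ past $f(\cdot,y)$ along a merely $\triangle$-convergent subsequence is exactly what forces the stronger $\triangle$-upper semicontinuity hypothesis in place of $P2$; the Fej\'er-type monotonicity towards $S(f,K)$, $\triangle$-compactness, and the Opial-type uniqueness are standard adaptations of the Hilbert-space proximal-point analysis.
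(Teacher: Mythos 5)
Your proof is correct. The first two stages coincide with the paper's: boundedness, $d(x_{k+1},x_k)\to 0$ and the existence of $\lim_k d(x_k,x^*)$ come from Lemma \ref{bounded}, the Cauchy--Schwarz estimate on the quasi-linearization term gives $\liminf f(x_{k+1},y)\geq 0$, and $\triangle$-upper semicontinuity identifies every $\triangle$-cluster point as an element of $S(f,K)$ (you are in fact more careful than the paper about the index shift from $x_{k_j}$ to $x_{k_j+1}$, and your observation that $P2$ in Lemma \ref{bounded} is recovered from $\triangle$-upper semicontinuity is a point the paper leaves implicit). The genuine difference is the final uniqueness step. The paper expands $2\langle\overrightarrow{x_{k_i}x_{k_j}},\overrightarrow{x''x'}\rangle$ by quasi-linearization, uses the existence of $\lim_k d(x_k,x')$ and $\lim_k d(x_k,x'')$ to make the iterated limit vanish, and then invokes Theorem 2.6 of \cite{ak} (the quasi-linearization characterization of $\triangle$-convergence) to conclude $d^2(x',x'')\leq 0$. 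You instead run the classical Opial-type argument: $A(\{x_{k_j}\})=\{u\}$ is the \emph{unique} minimizer of $r(\cdot,\{x_{k_j}\})$, and since the full limits $\ell_u$ and $\ell_v$ exist by Fej\'er monotonicity, $u\neq v$ would force both $\ell_u<\ell_v$ and $\ell_v<\ell_u$. Both arguments are valid; yours is more self-contained, resting only on the singleton property of asymptotic centers already recorded in the introduction, whereas the paper's route imports the external quasi-linearization result from \cite{ak}.
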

\begin{proof}
Fix $y\in K$. Since $ x_{k+1}$ solves $EP(f_k,K)$, hence we have:\\
$0\leq f_k(x_{k+1},y)= f(x_{k+1},y)+\lambda_k \langle  \overrightarrow{y_kx_{k+1} },\overrightarrow{x_{k+1}y }\rangle $
$\leq f(x_{k+1},y)+\lambda_k d(y_k,x_{k+1})d(x_{k+1},y )$\\
$\leq f(x_{k+1},y)+\lambda_k [d(y_k,x_k)+d(x_k,x_{k+1})]d(x_{k+1},y )$.\\
Since $\{\lambda_k\}$ and $\{x_k\}$ are bounded and by Lemma \ref{bounded}, $\lim d(x_{k+1},x_k)=0$, we have:
\begin{equation}
0\leq \liminf f(x_k,y), \   \   \  \forall y\in K.
\label{liminff}\end{equation}
On the other hand, since $\{x_k\}$ is bounded and $K$ is closed and convex, there exist a subsequence $\{x_{k_i}\}$ of $\{x_k\}$ and $x'\in K$ such that $x_{k_i}\overset{\triangle}{\longrightarrow}x'$. Now since $f(\cdot,y)$ is $\triangle$-upper semicontinuous for all $y\in K$, we have:
$$0\leq \liminf f(x_{k},y)\leq \limsup f(x_{k_i},y)\leq f(x',y)$$
for all $y\in K$. So that $x'\in S(f,K)$.\\
It remains to prove that there exists only one $\triangle$-cluster point of $\{x_k\}$. Let $x',x''$ be two $\triangle$-cluster points of $\{x_k\}$ so that there exist two subsequences $\{x_{k_i}\}$ and $\{x_{k_j}\}$ of $\{x_k\}$ whose $\triangle-\lim$  points are $x'$ and $x''$ respectively. We have already proved that $x'$ and $x''$
are solutions of $EP(f,K)$.
In turn by (\ref{lim-exist}), we can assume that $\lim d(x_k,x')=\delta_1$ and $\lim d(x_k,x'')=\delta_2$.
On the other hand, we have:
$$2\langle  \overrightarrow{x_{k_i}x_{k_j}},\overrightarrow{x''x'}\rangle = d^2(x_{k_i},x')+d^2(x_{k_j},x'')-d^2(x_{k_i},x'')-d^2(x_{k_j},x').$$
Letting $i\rightarrow +\infty$, and then $j\rightarrow +\infty$, we get $\underset{j\rightarrow +\infty}{\lim}\underset{i\rightarrow +\infty}{\lim} \langle  \overrightarrow{x_{k_i}x_{k_j}},\overrightarrow{x''x'}\rangle =0$.
Also, we can write the left side of the above statement as:
$$2\langle  \overrightarrow{x_{k_i}x_{k_j}},\overrightarrow{x''x'}\rangle =
2\langle  \overrightarrow{x_{k_i}x'},\overrightarrow{x''x'}\rangle +
2\langle  \overrightarrow{x'x''},\overrightarrow{x''x'}\rangle +
2\langle  \overrightarrow{x''x_{k_j}},\overrightarrow{x''x'}\rangle .$$
By taking $\limsup$ from the above statement and using Theorem 2.6 of \cite{ak}, we conclude, $d^2(x',x'')\leq0$, hence $x'=x''$.
This establishes  that the set of all $\triangle$-cluster points of $\{x_k\}$ is singleton.
\end{proof}

\begin{definition}
A bifunction $f:K\times K\rightarrow\mathbb{R}$ is called strongly monotone, if there exists $\alpha > 0$ such that:
$f(x,y)+f(y,x)\leq -\alpha d^2(x,y)$, for all $x,y \in K$.\\
Also, a bifunction $f:K\times K\rightarrow\mathbb{R}$ is called strongly pseudo-monotone, if there exists a $\beta> 0$ such that if $f(x,y)\geq 0$, then $f(y,x)\leq -\beta d^2(x,y)$,  for all $x,y \in K$.
\end{definition}

It is obvious that if $f$ is strongly monotone, then  $f$ is strongly pseudo-monotone.

\begin{theorem}
Consider $EP(f,K)$, where $f$ satisfies $P1$, $P2$, $P3$, $P4^*$, $P4^{\bullet}$ and $S(f,K)\neq\varnothing$.
If each one of the following conditions satisfies:\\
i) $f$ is strongly pseudo-monotone,\\
ii) $f(x,\cdot)$ is strongly convex for all $x \in K$,\\
iii) $f(\cdot,y)$ is strongly concave for all $y \in K$,\\
then the sequence $\{x_k\}$  generated by (\ref{regularization}), is strongly convergent to a point of $S(f,K)$.
\end{theorem}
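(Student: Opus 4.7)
The plan is to reduce all three cases to the common estimate
\[
c\, d(x_{k+1}, x^*) \;\leq\; \lambda_k\, d(y_k, x_{k+1})
\]
for an arbitrary $x^* \in S(f,K)$ and some constant $c > 0$ depending on the case. Since $d(y_k, x_{k+1}) \leq e_k + d(x_k, x_{k+1})$ and both summands tend to zero by Lemma~\ref{bounded}, this estimate forces $d(x_{k+1}, x^*) \to 0$ and thus gives strong convergence of $\{x_k\}$ to the element of $S(f,K)$ that is obtained as the $\Delta$-limit in Theorem~\ref{delta convergence ppa}.

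For case (i), I would plug $y = x^*$ into (\ref{regularization}). Since $x^* \in S(f,K)$ implies $f(x^*, x_{k+1}) \geq 0$, strong pseudo-monotonicity gives $f(x_{k+1}, x^*) \leq -\beta d^2(x_{k+1}, x^*)$. Combined with the Cauchy--Schwarz bound $\langle \overrightarrow{y_k x_{k+1}}, \overrightarrow{x_{k+1} x^*}\rangle \leq d(y_k, x_{k+1})\, d(x_{k+1}, x^*)$ this yields $\beta\, d(x_{k+1}, x^*) \leq \lambda_k\, d(y_k, x_{k+1})$ at once.

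For cases (ii) and (iii) the idea is to use the interpolated point $z_t := (1-t) x_{k+1} \oplus t x^*$, and to extract from the one-slice strong convexity/concavity, via a one-sided limit in $t$, a bound of the same shape $f(x_{k+1}, x^*) \leq -\alpha d^2(x_{k+1}, x^*)$. In (ii), $P1$ and strong convexity of $f(x_{k+1}, \cdot)$ give $f(x_{k+1}, z_t) \leq t f(x_{k+1}, x^*) - \alpha t(1-t) d^2(x_{k+1}, x^*)$; plugging this and $d(x_{k+1}, z_t) = t\, d(x_{k+1}, x^*)$ into (\ref{regularization}) at $y = z_t$, dividing by $t$, and sending $t \to 0^+$ (using $f(x_{k+1}, x^*) \leq 0$ from $P4^*$) yields $\alpha\, d(x_{k+1}, x^*) \leq \lambda_k\, d(y_k, x_{k+1})$. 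In (iii), strong concavity of $f(\cdot, x^*)$ and $P1$ give $f(z_t, x^*) \geq (1-t) f(x_{k+1}, x^*) + \alpha t(1-t) d^2(x_{k+1}, x^*)$; because $f(x^*, z_t) \geq 0$ forces $f(z_t, x^*) \leq 0$ by $P4^*$, dividing by $1-t$ and letting $t \to 1^-$ gives $f(x_{k+1}, x^*) \leq -\alpha d^2(x_{k+1}, x^*)$, reducing to case (i).

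The hardest step is cases (ii) and (iii), where strong convexity/concavity is assumed only on a single slice of $f$ and must be converted into a pointwise lower bound on $-f(x_{k+1}, x^*)$ of the same shape that strong pseudo-monotonicity hands us for free in (i). The key trick is that geodesic interpolation together with $f(x_{k+1}, x_{k+1}) = 0$ from $P1$, followed by a one-sided limit in $t$, turns a one-slice condition into a ``directional'' inequality at $x_{k+1}$ which is exactly what is needed. Once this is in place, Cauchy--Schwarz and Lemma~\ref{bounded} close all three cases uniformly.
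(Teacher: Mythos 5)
Your proposal is correct and follows essentially the same route as the paper: case (i) uses strong pseudo-monotonicity to bound $f(x_{k+1},x^*)$ by $-\beta d^2(x_{k+1},x^*)$, and cases (ii) and (iii) use geodesic interpolation between the iterate and $x^*$ together with $P1$ and $P4^*$ to extract the same quadratic bound, after which Cauchy--Schwarz and Lemma \ref{bounded} (or, in the paper, the inequality \eqref{liminff}) finish the argument. The only differences are cosmetic: you take one-sided limits in $t$ where the paper fixes $\lambda\in(0,1)$, and you apply Cauchy--Schwarz to the whole quasilinearization term where the paper splits it along the geodesic first.
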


\begin{proof}
Take $x^* \in S(f,K)$. In each part, we show $x_k$ converges strongly to $x^* \in S(f,K)$.\\
i)
Since $f(x^*,x_k)\geq0$, by assumption there is $\beta > 0$ such that, $f(x_k,x^*)\leq -\beta d^2(x_k,x^*)$  for all $k\in \mathbb N$.
Next, by (\ref{liminff}) in the proof of Theorem \ref{delta convergence ppa}, we have $\liminf f(x_k,x^*)\geq0$. Therefore by taking liminf, we have:\\
$0\leq \liminf f(x_k,x^*)\leq \liminf(-\beta d^2(x_k,x^*))=-\beta\limsup d^2(x_k,x^*)$
and hence, we deduce that  $x_k\longrightarrow x^*$.\\
ii)
Let $\lambda \in (0,1)$ and set $w_k=\lambda x_k \oplus (1-\lambda)x^*$ for all $k\in \mathbb N$. Since $f(x_k,\cdot)$ is strongly convex, we have:\\
$0\leq f(x_k,w_k)+\lambda_{k-1}\langle \overrightarrow{y_{k-1}x_k},\overrightarrow{x_kw_k}\rangle \\
\leq\lambda f(x_k,x_k) + (1-\lambda) f(x_k,x^*)-\lambda(1-\lambda)
d^2(x_k,x^*)+(1-\lambda)\lambda_{k-1}\langle \overrightarrow{y_{k-1}x_k},\overrightarrow{x_kx^*}\rangle $.\\
Hence, we have:
$ \lambda d^2(x_k,x^*) \leq \lambda_{k-1}\langle \overrightarrow{y_{k-1}x_k},\overrightarrow{x_kx^*}\rangle $.
By using Cauchy-Schwarz inequality, we get: $ \lambda d(x_k,x^*)\leq \lambda_{k-1}d(y_{k-1},x_k)\leq \lambda_{k-1}(d(y_{k-1},x_{k-1})+d(x_{k-1},x_k))\leq \lambda_{k-1} (e_{k-1}+d(x_{k-1},x_k))$. Now, from Lemma  \ref{bounded}, we conclude that $x_k\longrightarrow x^*$.

iii)
Let $\lambda \in (0,1)$ and set $w_k=\lambda x_k \oplus (1-\lambda)x^*$, for all $k\in \mathbb N$. Since $f(\cdot,x^*)$ is strongly concave, we have:\\
$ \lambda f(x_k,x^*) + (1-\lambda) f(x^*,x^*)+\lambda(1-\lambda)
d^2(x_k,x^*)\leq f(w_k,x^*)\leq 0$.\\
Now we get: $ f(x_k,x^*) \leq -(1-\lambda)d^2(x_k,x^*)$. Next, by (\ref{liminff}) in the proof of Theorem \ref{delta convergence ppa} and taking liminf, we have:\\
$0\leq \liminf f(x_k,x^*)\leq -(1-\lambda)\limsup d^2(x_k,x^*)$,
and hence, we deduce that the sequence $\{x_k\}$ is strongly convergent to $x^* \in S(f,K)$.

\end{proof}

\section{\bf Halpern Regularization}
Let $K\subseteq X$ be closed and convex, and $f:K\times K\rightarrow\mathbb{R}$  be a bifunction and
suppose that $\theta$ is the undermonotonicity constant of $f$. Take a
sequence of regularization parameters $\{\lambda_k\}\subset
(\theta, \bar{\lambda}]$,
for some $\bar{\lambda} > \theta$ and $x_0 \in X$.
Consider the following Halpern regularization of the proximal point algorithm for equilibrium problem:
\begin{equation}\begin{cases}f(y_k,y)+\lambda_{k-1}\langle \overrightarrow{x_{k-1}y_k},\overrightarrow{y_ky}\rangle \geq 0 ,\ \ \ \ \   \forall y\in K,\\
x_k=\alpha_k u\oplus(1-\alpha_k)y_k,\\
\end{cases}\label{bi-hc}\end{equation}
where $u \in X$ and the sequence $\{\alpha_k\}\subset(0,1)$ satisfies $\lim \alpha_k=0$ and $\sum_{k=1}^{+\infty}\alpha_k= +\infty$. We will prove the strong convergence of the generated sequence by (\ref{bi-hc}) to a solution of $EP(f,K)$ that the bifunction $f$ satisfies $P1$,
$P2$, $P3$, $P4^*$ and $P4^{\bullet}$. In fact, we prove $x_k\rightarrow x^*={\rm Proj}_{S(f,K)}u$.
First we give an essential lemma that we need in the sequel.

\begin{lemma}\cite{ss1} \label{hss1}
	Let $\lbrace s_{k}\rbrace$ be a sequence of nonnegative real numbers, $\lbrace a_{k}\rbrace$ be a sequence of real numbers in $(0, 1)$ with $\sum _{k=1}^{\infty} a_{k} = +\infty$ and $\lbrace t_{k}\rbrace$ be a sequence of real numbers. Suppose that
	$$s_{k+1} \leq (1-a_{k}) s_{k} + a_{k} t_{k},\ \ \ \ \ \forall k\in \mathbb{N}$$
	If $\limsup t_{k_{n}} \leq 0$ for every subsequence $\{s_{k_{n}}\}$ of $\{s_k\}$ satisfying $\liminf (s_{k_{n}+1}-s_{k_{n}})\geq 0$, then $\lim s_{k} = 0$.
\end{lemma}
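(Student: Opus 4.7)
The plan is to proceed by a case distinction on the behaviour of the sequence $\{s_k\}$, using the technique due to Maing\'e. Let $L := \limsup_{k\to\infty} s_k \geq 0$; I want to show $L = 0$.

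\textbf{Case 1:} There exists $N$ such that $s_{k+1} \leq s_k$ for all $k \geq N$. Then $\{s_k\}$ is eventually nonincreasing and bounded below by $0$, hence converges to some $s^{\ast}\geq 0$. In particular $s_{k+1}-s_k\to 0$, so the whole tail of $\{s_k\}$ is itself a subsequence satisfying $\liminf(s_{k_n+1}-s_{k_n})\geq 0$; the standing hypothesis of the lemma therefore gives $\limsup t_k \leq 0$. Rewriting the recursion as $s_{k+1} - \varepsilon \leq (1-a_k)(s_k-\varepsilon) + a_k(t_k-\varepsilon)$ and choosing $N_\varepsilon$ so large that $t_k \leq \varepsilon$ for $k\geq N_\varepsilon$, induction yields $s_{k+1}-\varepsilon \leq \prod_{j=N_\varepsilon}^{k}(1-a_j)\,(s_{N_\varepsilon}-\varepsilon)$. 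Since $\sum a_k = +\infty$, the product tends to $0$, giving $\limsup s_k \leq \varepsilon$; as $\varepsilon>0$ is arbitrary and $s_k\geq 0$, we get $s_k\to 0$.

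\textbf{Case 2:} The set $\Gamma:=\{k : s_k < s_{k+1}\}$ is infinite. For $k$ large enough, put
$$\tau(k) := \max\{\,j \in \Gamma : j \leq k\,\}.$$
Then $\tau(k)\to\infty$ as $k\to\infty$. Moreover, by the maximality of $\tau(k)$, the integers $\tau(k)+1,\ldots,k$ all lie outside $\Gamma$, so telescoping gives $s_{\tau(k)+1}\geq s_{\tau(k)+2}\geq\cdots\geq s_k$, and in particular $s_k \leq s_{\tau(k)+1}$ for all large $k$ (and also when $\tau(k)=k$).

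Now the key computation: since $\tau(k)\in\Gamma$, we have $s_{\tau(k)}<s_{\tau(k)+1}$, hence $\liminf_{k\to\infty}\bigl(s_{\tau(k)+1}-s_{\tau(k)}\bigr)\geq 0$, so the hypothesis applied to the subsequence $\{s_{\tau(k)}\}$ delivers $\limsup_{k\to\infty} t_{\tau(k)} \leq 0$. On the other hand, the recursion $s_{\tau(k)+1} \leq (1-a_{\tau(k)})s_{\tau(k)} + a_{\tau(k)}t_{\tau(k)}$ together with $s_{\tau(k)}<s_{\tau(k)+1}$ yields $a_{\tau(k)}s_{\tau(k)} < a_{\tau(k)}t_{\tau(k)}$, i.e.\ $s_{\tau(k)} < t_{\tau(k)}$. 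Therefore $\limsup s_{\tau(k)} \leq \limsup t_{\tau(k)} \leq 0$, so $s_{\tau(k)}\to 0$. Plugging back into the recursion and using $a_{\tau(k)}\in(0,1)$, we obtain $s_{\tau(k)+1}\to 0$ as well, and finally $s_k\leq s_{\tau(k)+1}\to 0$.

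The main obstacle is the construction of the right subsequence to which the hypothesis can be applied: one cannot extract directly a subsequence with $s_{k_n}\to L$ while controlling $\liminf(s_{k_n+1}-s_{k_n})$, because the extreme points of $\{s_k\}$ need not lie below their successors. Maing\'e's trick circumvents this by defining $\tau(k)$ as the \emph{last} index up to time $k$ where the sequence has just increased; this produces a subsequence along which the recursion can be inverted to bound $s_{\tau(k)}$ by $t_{\tau(k)}$, and simultaneously dominates every $s_k$ by $s_{\tau(k)+1}$, which is what allows us to transfer convergence from the subsequence to the whole sequence.
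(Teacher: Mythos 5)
Your proof is correct. The paper does not prove this lemma at all --- it simply cites Saejung--Yotkaew \cite{ss1} --- and your argument is precisely the standard proof from that source, based on Maing\'e's device of the last ascent index $\tau(k)=\max\{j\le k:\ s_j<s_{j+1}\}$; the only cosmetic point worth tightening is that $k\mapsto\tau(k)$ is nondecreasing rather than strictly increasing, so to invoke the hypothesis literally one should enumerate the (infinite) set $\Gamma$ of values of $\tau$ as a genuine strictly increasing subsequence $\{k_n\}$, along which $s_{k_n}<s_{k_n+1}$ gives $\liminf(s_{k_n+1}-s_{k_n})\ge 0$ and hence $\limsup t_{k_n}\le 0$, which then transfers to $\limsup_k t_{\tau(k)}\le 0$.
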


\begin{theorem}\label{strong convergence halpern}
Suppose that $f$ satisfies $P1, P3$, $P4^* $, $P4^{\bullet}$ and $S(f,K)\neq\varnothing$. If $f(\cdot,y)$ is $\bigtriangleup$-upper semicontinuous for all $y\in K$, then $\{x_k\}$ converges strongly to ${\rm Proj}_{S(f,K)}u$, where $\{x_k\}$ is a sequence generated by (\ref{bi-hc}).
\end{theorem}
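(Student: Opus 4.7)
Let $x^{*}=\operatorname{Proj}_{S(f,K)}u$; the plan is to reduce to Lemma~\ref{hss1} by producing a recursion of the form $s_{k+1}\le(1-a_{k})s_{k}+a_{k}t_{k}$ with $s_{k}=d^{2}(x_{k},x^{*})$, $a_{k}=\alpha_{k+1}$, and $t_{k}=2\langle\overrightarrow{x^{*}u},\overrightarrow{x^{*}x_{k+1}}\rangle$, and then verifying its subsequence hypothesis. As a preliminary, since $P4^{*}$ and $p\in S(f,K)$ yield $f(y_{k},p)\le 0$, substituting $y=p$ in the defining inequality leaves $\langle\overrightarrow{x_{k-1}y_{k}},\overrightarrow{y_{k}p}\rangle\ge 0$; expanding via the quasi-linearization this is equivalent to
\[
d^{2}(y_{k},p)+d^{2}(x_{k-1},y_{k})\le d^{2}(x_{k-1},p),
\]
so, combined with Lemma~\ref{metric}(i) applied to the convex combination defining $x_{k}$ and an induction, both $\{x_{k}\}$ and $\{y_{k}\}$ are bounded.

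For the key recursion I would first establish, as a CAT(0) analogue of a standard Hilbert identity, that whenever $v_{\alpha}=\alpha u\oplus(1-\alpha)v$ and $p\in X$,
\[
d^{2}(v_{\alpha},p)\le(1-\alpha)^{2}d^{2}(v,p)+2\alpha\,\langle\overrightarrow{pu},\overrightarrow{pv_{\alpha}}\rangle.
\]
The proof starts from $d(u,v_{\alpha})=(1-\alpha)d(u,v)$ and Lemma~\ref{metric}(iv) and then expands the quasi-inner product on the right. Applied with $v=y_{k}$ and $p=x^{*}$, and combined with the monotonicity estimate from the preliminary step, it gives
\[
d^{2}(x_{k},x^{*})\le(1-\alpha_{k})^{2}\bigl[d^{2}(x_{k-1},x^{*})-d^{2}(x_{k-1},y_{k})\bigr]+2\alpha_{k}\langle\overrightarrow{x^{*}u},\overrightarrow{x^{*}x_{k}}\rangle,
\]
which, after dropping the negative term and using $(1-\alpha_{k})^{2}\le 1-\alpha_{k}$, is the required recursion.

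To verify the subsequence hypothesis of Lemma~\ref{hss1}, let $\{s_{k_{n}}\}$ satisfy $\liminf(s_{k_{n}+1}-s_{k_{n}})\ge 0$. Rearranging the full recursion and using $\alpha_{k_{n}+1}\to 0$ with boundedness of the remaining quantities gives $\limsup(1-\alpha_{k_{n}+1})^{2}d^{2}(x_{k_{n}},y_{k_{n}+1})\le -\liminf(s_{k_{n}+1}-s_{k_{n}})\le 0$, hence $d(x_{k_{n}},y_{k_{n}+1})\to 0$; since $d(y_{k_{n}+1},x_{k_{n}+1})=\alpha_{k_{n}+1}d(u,y_{k_{n}+1})\to 0$, also $d(x_{k_{n}},x_{k_{n}+1})\to 0$. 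Extract a further subsequence (Lemma~\ref{compact}) so that $x_{k_{n}}\overset{\triangle}{\to}\bar{x}\in K$; then $y_{k_{n}+1},x_{k_{n}+1}\overset{\triangle}{\to}\bar{x}$ as well. The algorithm inequality together with Cauchy--Schwarz yields $\liminf f(y_{k_{n}+1},y)\ge 0$, and $\triangle$-upper semicontinuity of $f(\cdot,y)$ forces $f(\bar{x},y)\ge 0$ for every $y\in K$, so $\bar{x}\in S(f,K)$. The projection characterization (Theorem 2.2 of \cite{dehghan}) then gives $\langle\overrightarrow{x^{*}u},\overrightarrow{x^{*}\bar{x}}\rangle\le 0$, and splitting
\[
\langle\overrightarrow{x^{*}u},\overrightarrow{x^{*}x_{k_{n}+1}}\rangle=\langle\overrightarrow{x^{*}u},\overrightarrow{x^{*}\bar{x}}\rangle+\langle\overrightarrow{x^{*}u},\overrightarrow{\bar{x}x_{k_{n}+1}}\rangle,
\]
the second summand vanishes in the limit by the weak continuity of the quasi-linearization under $\triangle$-convergence (Theorem 2.6 of \cite{ak}, used exactly as in the proof of Theorem~\ref{delta convergence ppa}). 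Thus $\limsup t_{k_{n}}\le 0$, and Lemma~\ref{hss1} delivers $s_{k}\to 0$.

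The main obstacle is the Halpern-type CAT(0) identity invoked for the key recursion: Lemma~\ref{metric}(iv) alone produces the wrong leading coefficient $(1-\alpha)$ instead of $(1-\alpha)^{2}$ and no $\langle u,\cdot\rangle$ pairing at all, so one cannot avoid an explicit quasi-linearization expansion combined with the identity $d(u,v_{\alpha})=(1-\alpha)d(u,v)$. Everything that follows is a combination of the Kakavandi-type weak-convergence identities and the projection characterization, both of which have already been used elsewhere in the paper, so the bulk of the novelty of the proof resides in this single inequality.
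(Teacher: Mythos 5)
Your argument is sound up to, but not including, the very last step, and ironically the step you single out as the main obstacle is the unproblematic one: the inequality $d^{2}(v_{\alpha},p)\le(1-\alpha)^{2}d^{2}(v,p)+2\alpha\langle\overrightarrow{pu},\overrightarrow{pv_{\alpha}}\rangle$ is indeed valid (expanding the quasi-linearization and using $d(u,v_{\alpha})=(1-\alpha)d(u,v)$, it reduces to Lemma~\ref{metric}(iv) plus $\alpha\le\alpha/(1-\alpha)$). The genuine gap is the claim that $\langle\overrightarrow{x^{*}u},\overrightarrow{\bar{x}x_{k_{n}+1}}\rangle$ vanishes in the limit because $x_{k_{n}+1}\overset{\triangle}{\to}\bar{x}$. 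Theorem 2.6 of \cite{ak} only gives the one-sided bound $\limsup\langle\overrightarrow{\bar{x}x_{n}},\overrightarrow{\bar{x}y}\rangle\le 0$ for a $\triangle$-convergent sequence; $\triangle$-convergence does \emph{not} imply $w$-convergence (i.e. $\lim\langle\overrightarrow{\bar{x}x_{n}},\overrightarrow{\bar{x}y}\rangle=0$) in a general Hadamard space — that distinction is the main point of \cite{ak}. When you decompose your term as $\langle\overrightarrow{x^{*}u},\overrightarrow{\bar{x}x_{k_{n}+1}}\rangle=-\langle\overrightarrow{\bar{x}x_{k_{n}+1}},\overrightarrow{\bar{x}x^{*}}\rangle+\langle\overrightarrow{\bar{x}x_{k_{n}+1}},\overrightarrow{\bar{x}u}\rangle$, the second summand has $\limsup\le 0$ with the correct orientation, but the first appears with a minus sign, so you would need $\liminf\langle\overrightarrow{\bar{x}x_{k_{n}+1}},\overrightarrow{\bar{x}x^{*}}\rangle\ge 0$, which Theorem 2.6 does not provide. (In the proof of Theorem~\ref{delta convergence ppa} every pairing happens to occur with the favourable orientation, which is why the analogy you invoke breaks down here.) So $\limsup t_{k_{n}}\le 0$ is not established, and Lemma~\ref{hss1} cannot be applied with your choice $t_{k}=2\langle\overrightarrow{x^{*}u},\overrightarrow{x^{*}x_{k+1}}\rangle$.

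The paper circumvents exactly this difficulty by choosing a different $t_{k}$: applying Lemma~\ref{metric}(iv) directly to $x_{k+1}=\alpha_{k+1}u\oplus(1-\alpha_{k+1})y_{k+1}$ yields the recursion with $t_{k}=d^{2}(u,x^{*})-(1-\alpha_{k+1})d^{2}(u,y_{k+1})$, so that verifying $\limsup t_{k_{n}}\le 0$ requires only the $\triangle$-lower semicontinuity of the convex continuous function $d^{2}(u,\cdot)$ (giving $\liminf d^{2}(u,y_{k_{n_{i}}+1})\ge d^{2}(u,p)$ for the $\triangle$-limit $p\in S(f,K)$) together with the minimality $d(u,x^{*})\le d(u,p)$ of the projection. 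Your preliminary estimates, the boundedness argument, the proof that $d(x_{k_{n}},y_{k_{n}+1})\to 0$ along the relevant subsequences, and the identification of the $\triangle$-cluster point as an element of $S(f,K)$ all match the paper and are correct; only the final limsup step needs to be rerouted through squared distances rather than quasi-linearization pairings.
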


\begin{proof}
Since $ S(f,K)$ is closed and convex, therefore we assume that  $ x^*={\rm Proj}_{S(f,K)}u$. Note that $f(y_k,x^*)+\lambda_{k-1}\langle \overrightarrow{x_{k-1}y_k},\overrightarrow{y_kx^*}\rangle \geq 0$. In turn, since by $P4^* $  every element of $S(f,K)$ solves $CFP(f,K)$, we have $f(y_k,x^*)\leq 0$, thus
$\langle \overrightarrow{x_{k-1}y_k},\overrightarrow{y_kx^*}\rangle \geq 0$, i.e. we have \begin{equation}d^2(x^*,x_{k-1})-d^2(x^*,y_k)-d^2(x_{k-1},y_k)\geq 0.\label{eq4}\end{equation}
Hence $d(x^*,y_k)\leq d(x^*,x_{k-1})$.
On the other hand by (\ref{bi-hc}), we have:\\
$d(x^*,x_k)\leq \alpha_k d(x^*,u)+(1-\alpha_k)d(x^*,y_k)
\leq \alpha_k d(x^*,u)+(1-\alpha_k)d(x^*,x_{k-1})\leq max \{d(x^*,u),d(x^*,x_{k-1})\}
 \leq \cdots \leq max \{d(x^*,u),d(x^*,x_0)\}$.\\
Therefore $\{x_k\}$ is bounded. Since $d(x^*,y_k)\leq d(x^*,x_{k-1})$ for all $k \in \mathbb{N}$, $\{y_k\}$ is bounded.
Now, by (\ref{bi-hc}), we have:\\
$d^2(x_{k+1},x^*)\leq (1-\alpha_{k+1})d^2(y_{k+1},x^*)+\alpha_{k+1}d^2(u,x^*)-\alpha_{k+1}(1-\alpha_{k+1})d^2(u,y_{k+1}).$
Since by (\ref{eq4}), $d^2(x^*,y_{k+1})\leq d^2(x^*,x_k)$, therefore
\begin{equation}
d^2(x_{k+1},x^*)\leq (1-\alpha_{k+1})d^2(x_k,x^*)+\alpha_{k+1}d^2(u,x^*)-\alpha_{k+1}(1-\alpha_{k+1})d^2(u,y_{k+1}).
\label{eq5}\end{equation}
In the sequel, we show $d(x_{k+1},x^*)\rightarrow 0$. By Lemma \ref{hss1}, it suffices to show that $\limsup (d^2(u,x^*)-(1-\alpha_{k_{n}+1})d^2(u,y_{k_{n}+1}))\leq 0$  for every subsequence $\{d^2(x_{k_{n}}, x^*)\}$ of $\{d^2(x_k,x^*)\}$ satisfying  $ \liminf (d^2(x_{k_{n}+1}, x^*) - d^2(x_{k_{n}}, x^*)) \geq 0$.\\
	For this, suppose that $\{d^2(x_{k_{n}}, x^*)\}$ is a subsequence of $\{d^2(x_k,x^*)\}$ such that $\liminf (d^2(x_{k_{n}+1}, x^*) - d^2(x_{k_{n}}, x^*)) \geq 0$. Then\\
$0\leq \liminf(d^2(x^*,x_{{k_n}+1})-d^2(x^*,x_{k_n}))\leq \liminf(\alpha_{{k_n}+1}d^2(x^*,u)+(1-\alpha_{{k_n}+1})d^2(x^*,y_{{k_n}+1}) \\ -d^2(x^*,x_{k_n}))
=\liminf(\alpha_{{k_n}+1}(d^2(x^*,u)-d^2(x^*,y_{{k_n}+1}))+d^2(x^*,y_{{k_n}+1}) -d^2(x^*,x_{k_n}))\\
\leq\limsup\alpha_{{k_n}+1}(d^2(x^*,u)-d^2(x^*,y_{{k_n}+1}))+\liminf (d^2(x^*,y_{{k_n}+1}) -d^2(x^*,x_{k_n}))\\
= \liminf (d^2(x^*,y_{{k_n}+1}) -d^2(x^*,x_{k_n}))\leq\limsup (d^2(x^*,y_{{k_n}+1}) -d^2(x^*,x_{k_n}))\leq0$.\\
Therefore, we conclude that $\lim (d^2(x^*,y_{{k_n}+1})-d^2(x^*,x_{k_n}))=0$, hence by (\ref{eq4}), we get $\lim d^2(x_{k_n},y_{{k_n}+1})=0$.\\
On the other hand, there are a subsequence $\{y_{{k_{n_i}}+1}\}$ of $\{y_{{k_n}+1}\}$ and $p\in K$ such that
$y_{{k_{n_i}}+1}\overset{\triangle}{\longrightarrow}p$  and
$$\limsup (d^2(u,x^*)-(1-\alpha_{{k_n}+1})d^2(u,y_{{k_n}+1}))=\lim (d^2(u,x^*)-(1-\alpha_{{k_{n_i}}+1})d^2(u,y_{{k_{n_i}}+1}))$$
Since $y_{{k_{n_i}}+1}\overset{\triangle}{\longrightarrow} p$. Hence, we have:\\
$0\leq\limsup (f(y_{{k_{n_i}}+1},y)+\lambda_{k_{n_i}}\langle \overrightarrow{x_{k_{n_i}}y_{{k_{n_i}}+1}},\overrightarrow{y_{{k_{n_i}}+1}y}\rangle )\\
\leq\limsup ( f(y_{{k_{n_i}}+1},y)+\lambda_{k_{n_i}}d(x_{k_{n_i}},y_{{k_{n_i}}+1}) d(y_{{k_{n_i}}+1},y))\leq f(p,y)$,\\
for all $y\in K$. Therefore $p\in S(f,K)$.
Now, since $ x^*={\rm Proj}_{S(f,K)}u$, hence  $$\limsup (d^2(u,x^*)-(1-\alpha_{{k_n}+1})d^2(u,y_{{k_n}+1}))\leq d^2(u,x^*)-d^2(u,p)\leq0.$$
Therefore Lemma \ref{hss1} shows that
\begin{equation}d(x_{n+1},x^*)\rightarrow 0.\label{rks}\end{equation}
Also, \eqref{eq4} implies that $d(y_{n+1}, x^*) \rightarrow 0$, which is the desired result.

\end{proof}

\section{Applications to Fixed Point Theory and Convex Minimization}

In this short section we present two examples of equilibrium problems in Hadamard spaces.

1. Let $X$ be a Hadamard space. $T:X\rightarrow X$ is called pseudo-contraction iff $$\langle \overrightarrow{TxTy},\overrightarrow{xy}\rangle\leq d^2(x,y),\ \ \forall x,y\in X.$$
It is easy to check that if $T$ is pseudo-contraction, then $f(x,y)=\langle\overrightarrow{Txx},\overrightarrow{xy}\rangle$ is a monotone bifunction. If $T$ is nonexpansive, which is a stronger condition, then $J^f_{\lambda}=J^T_{\lambda}$, where $J^T_{\lambda}$ is the resolvent of $T$ (see \cite{bac2, bac-rei, kr2}). Now the results of this paper is applicable to find and approximate an equilibrium point of $f$, which is a solution of variational inequality for $T$.

2. To solve the constraint minimization problem
$$\min_{x\in K} \varphi(x),$$ which the constraint set $K$ is a convex and closed subset of a Hadamard space $X$ and $\varphi:X\rightarrow]-\infty,+\infty]$ is a convex, proper and lower semicontinuous function, we can consider the monotone bifunction $f(x,y)=\varphi(y)-\varphi(x)$ on $K\times K$. It is easy to see that $J_{\lambda}^f=J_{\lambda}^{\varphi}$, where $J_{\lambda}^{\varphi}$ is the resolvent of $\varphi$. (see \cite{bac1, bac2}). Then the methods discussed in Sections 4-6 are applicable to approximate an equilibrium point of $f$, which is a minimum point of $\varphi$ on $K$. In fact Theorem \ref{strong convergence halpern} in case $f(x,y)=\varphi(y)-\varphi(x)$ extends Theorem 3.2 of \cite{ch}, which the sequence $\lambda_n$ is constant.

There are several examples of convex functions and minimization in Hadamard spaces. Some examples are energy functional on a Hadamard space and computation of median and means for a finite family of points in a Hadamard space. For more examples and explanations, the interest reader can consult \cite{bac1, bac2}.



\end{document}